\newtheorem{theoremA}{Theorem}
\renewcommand{\thetheoremName}
\newtheorem{theorem}{Theorem}[section]
\newtheorem*{theorem*}{Theorem}
\newtheorem{lemma}[theorem]{Lemma}
 \newtheorem{corollary}[theorem]{Corollary}
 \newtheorem*{mainthm*}{Main Theorem}
\theoremstyle{definition}
\newtheorem{definition}[theorem]{Definition}
\theoremstyle{remark}
\newtheorem{remark}[theorem]{Remark}
\definecolor{alert}{rgb}{0.8,0,0.3}
\newcommand{\alert}[1]{%
	\marginpar{%
		\ifodd\value{page} \raggedright \else \raggedleft \fi
		\footnotesize{\textcolor{alert}{#1}}
	}
}
\newcommand{\Div}{\operatorname{div}}
\newcommand{\erre}{\mathbb{R}}
\newcommand{\ese}{\mathbb{S}}
\begin{document}

\title[Fat equator effect and minimality]{Fat equator effect and Minimality in immersions and submersions of the Sphere}

%    Remove any unused author tags.

%    author one information

%    author two information
\author{Vicent Gimeno i Garcia}
\address{Department of Mathematics, Universitat Jaume I-IMAC,   E-12071, 
Castell\'{o}, Spain}
%\curraddr{}
\email{gimenov@mat.uji.es}
\author{Vicente Palmer}
\address{Department of Mathematics, Universitat Jaume I-INIT,   E-12071, 
Castell\'{o}, Spain}
%\curraddr{}
\email{palmer@mat.uji.es}

\thanks{Research partially supported by the Research Program of University Jaume I Project UJI-B2021-08, and and Research grant  PID2020-\-115930\-GA-100 funded by MCIN/ AEI /10.13039/501100011033}

\subjclass[2020]{Primary 58C35, 49Q15, 53A10}

\keywords{Concentration of measure, isometric immersion, Riemannian submersion, mean curvature, minimal submanifold, minimal fiber}

%\date{\today}

\dedicatory{}

%%%%%%%%%%%%%%%%%%
%Abstract
%%%%%%%%%%%%%%%%%%%%
\begin{abstract}
Inspired by the equatorial concentration of measure phenomenon in the sphere, a result which is deduced from the general, (and intrinsic), concentration of measure in $\ese^n(1)$, we describe in this paper an equatorial concentration of measure satisfied by the closed, (compact without boundary), isometric and minimal immersions $x:\Sigma^m \rightarrow \ese^n(1)$, ($m \leq n$), and by the minimal Riemannian submersions $\pi: \Sigma^m \rightarrow \ese^n(1)$, ($m \geq n$).\end{abstract}

\maketitle

%%%%%%%%%%%%%%%%%%
%Section: Introduction
%%%%%%%%%%%%%%%%%%%%
\section{Introduction}\label{sec:intro}\
%%%%%%%%%%%%%%%%%%%%%%

The deep connection between the notions of measure of sets, (in its meaning of Riemannian volume), distance between points and dimension in a Riemannian manifold is illustrated by the concentration of (intrinsic) measure phenomenon in the sphere $\ese^n(1)$, (that we consider in this paper equipped with a metric of constant sectional curvature $1$), as we can find it in the lecture notes \cite{MS}, (see Theorem \ref{measurecon} in this Introduction).

From this concentration of measure  it can be deduced a particular result, namely, an intrinsic {\em equatorial} measure concentration property satisfied by the strips around the equators of the sphere, also known as \lq\lq fat equator effect'', which can be roughly described by saying that for ``very large" dimension, almost all measure in the sphere concentrates in these strips surrounding its equators, independently of its width, (see Theorems \ref{thB} and Theorem \ref{teo:mitjana} in this Section). 

Now, an interesting question that can be raised from this last result appears when considering it from an extrinsic point of view, that is, from the point of view of the submanifold theory. 
We introduce at this point  the concept of \lq\lq {\em extrinsic} equatorial measure concentration"  for submanifolds in the sphere: we shall study then the relative volume of the intersection of a closed and minimal submanifold with the above mentioned strips around the equators of the ambient sphere, concluding, as in the intrinsic case, that for \lq\lq very large" dimension, almost all measure in the submanifold concentrates in their intersection with the strips of any width surrounding these equators, (see Theorem \ref{mainInt} in this Introduction).

On the other hand, there is a concentration of measure in the dual geometric setting with respect the isometric immersions given by the Riemmanian submersions. Namely, we shall prove a concentration of measure satisfied by the compact manifolds which admits a Riemannian submersion onto the sphere with minimal fibers. In this vein, such compact manifolds
 accumulates almost the whole measure on the  points which project to the strips of any width surrounding the equator of the sphere (see Theorem \ref{TeoSub}).

In the lecture notes by V. Milman and G. Schechtman \cite{MS} we can find the following theorem:

\begin{theoremA}[Corollary 2.2 in \cite{MS}]\label{measurecon}\

 If $A\subset \mathbb{S}^n(1)$ is a domain with ${\rm vol }(A)\geq \frac{1}{2} {\rm vol}(\mathbb{S}^n(1))$ then, for all $\epsilon \in [0,\frac{\pi}{2}]$, the  $\epsilon$-fattening of $A$ satisfies the inequality
\begin{equation}\label{eq1}
1 \geq \frac{{\rm vol}(A_\epsilon)}{{\rm vol}(\mathbb{S}^n(1))}\geq 1-\sqrt{\frac{\pi}{8}}e^{-\epsilon^2(n-1)/2}
\end{equation}
\noindent and hence, for all $\epsilon \in [0,\frac{\pi}{2}]$, 
\begin{equation}\label{eq2}
\lim_{n\to \infty} \frac{{\rm vol}(A_\epsilon)}{{\rm vol}(\mathbb{S}^n(1))}= 1.
\end{equation}
\end{theoremA}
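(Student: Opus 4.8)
The upper bound ${\rm vol}(A_\epsilon)/{\rm vol}(\ese^n(1))\le 1$ in (\ref{eq1}) is immediate from $A_\epsilon\subseteq\ese^n(1)$, so the content is the lower bound, and the plan is to derive it from \emph{L\'evy's spherical isoperimetric inequality}: among all Borel subsets of $\ese^n(1)$ of a prescribed volume, the geodesic balls (spherical caps) minimise the volume of the $\epsilon$-fattening, and do so simultaneously for every $\epsilon\ge 0$. Since moreover the volume of the $\epsilon$-fattening of a spherical cap increases with the volume of the cap, and since ${\rm vol}(A)\ge\tfrac12{\rm vol}(\ese^n(1))$, the first step is to reduce to the extremal configuration, namely to the inequality ${\rm vol}(A_\epsilon)\ge{\rm vol}(H_\epsilon)$, where $H$ is a closed hemisphere.

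The second step is to make this lower bound explicit. If $H=B(p,\tfrac{\pi}{2})$ is the hemisphere centred at a point $p$, then $H_\epsilon=B(p,\tfrac{\pi}{2}+\epsilon)$, whose complement is the open spherical cap of angular radius $\tfrac{\pi}{2}-\epsilon$ about the antipode of $p$; this is where the hypothesis $\epsilon\le\tfrac{\pi}{2}$ is used. Writing the volume of a cap of angular radius $r$ in geodesic polar coordinates as ${\rm vol}(\ese^{n-1}(1))\int_0^r\sin^{n-1}\theta\,d\theta$, one obtains
\[
1-\frac{{\rm vol}(A_\epsilon)}{{\rm vol}(\ese^n(1))}\ \le\ 1-\frac{{\rm vol}(H_\epsilon)}{{\rm vol}(\ese^n(1))}\ =\ \frac{\int_0^{\pi/2-\epsilon}\sin^{n-1}\theta\,d\theta}{\int_0^{\pi}\sin^{n-1}\theta\,d\theta},
\]
so the theorem is reduced to a one-variable estimate on these Wallis-type integrals (the case $\epsilon=\tfrac{\pi}{2}$ being trivial, since the numerator then vanishes).

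The last step is the estimate itself. The substitution $\theta=\tfrac{\pi}{2}-\phi$ rewrites the numerator as $\int_\epsilon^{\pi/2}\cos^{n-1}\phi\,d\phi$; using the elementary inequality $\cos\phi\le e^{-\phi^2/2}$ on $[0,\tfrac{\pi}{2}]$ and then $\phi^2\ge\epsilon^2+(\phi-\epsilon)^2$ for $\phi\ge\epsilon$, one gets
\[
\int_\epsilon^{\pi/2}\cos^{n-1}\phi\,d\phi\ \le\ e^{-(n-1)\epsilon^2/2}\int_0^{\infty}e^{-(n-1)u^2/2}\,du\ =\ \sqrt{\tfrac{\pi}{2}}\;\frac{e^{-(n-1)\epsilon^2/2}}{\sqrt{n-1}},
\]
while $\int_0^{\pi}\sin^{n-1}\theta\,d\theta=2\int_0^{\pi/2}\cos^{n-1}\phi\,d\phi\ge 2(n-1)^{-1/2}$, the latter inequality being an elementary consequence of the classical Wallis identity $I_kI_{k-1}=\tfrac{\pi}{2k}$ for $I_k:=\int_0^{\pi/2}\cos^k\phi\,d\phi$ together with the monotonicity $I_{k+1}\le I_k$. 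Dividing, the factors $(n-1)^{-1/2}$ cancel and one is left exactly with the bound $\sqrt{\pi/8}\;e^{-\epsilon^2(n-1)/2}$, which is (\ref{eq1}); and (\ref{eq2}) then follows, for $\epsilon>0$, from $e^{-\epsilon^2(n-1)/2}\to 0$ as $n\to\infty$. The conceptual heart of the whole argument is the spherical isoperimetric inequality, which one either cites from L\'evy and Schmidt or proves from scratch by symmetrisation / two-point rearrangement; granting that, the only genuinely delicate point is pinning down the sharp numerical constant $\sqrt{\pi/8}$, which is why the estimate of the denominator needs the two-sided Wallis control above rather than merely the crude Gaussian bound $\cos\phi\le e^{-\phi^2/2}$.
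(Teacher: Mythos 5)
Your proof is correct and follows the same route the paper sketches: reduce via L\'evy's spherical isoperimetric inequality to the extremal hemisphere, and then estimate the ratio of cap volumes. The difference is one of completeness rather than method: the paper records the reduction to $1-\int_{\pi/2+\epsilon}^{\pi}\sin^{n}t\,dt\big/\int_0^{\pi}\sin^{n}t\,dt$ (note its exponent should read $n-1$, as in your version) and then argues only the limit \eqref{eq2} informally, citing \cite{MS} for the explicit bound \eqref{eq1}, whereas you carry the estimate through to the sharp constant $\sqrt{\pi/8}$ via $\cos\phi\le e^{-\phi^2/2}$ and the Wallis lower bound $I_{n-1}\ge(n-1)^{-1/2}$ (which, as stated, uses $I_{n-1}\ge\sqrt{\pi/(2n)}$ together with the arithmetic check $\pi(n-1)\ge 2n$ for $n\ge 3$ and a direct verification at $n=2$).
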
  
\begin{remark}
Here, the $\epsilon$-fattening of $A$ is defined, using the intrinsic distance in the sphere $\ese^n(1)$, as
$$A_\epsilon:=\left\{x\in \mathbb{S}^n(1)\, :\, {\rm dist}^{\mathbb{S}^n(1)}(x,A)<\epsilon\, \right\}
$$
 \end{remark}
  
 Theorem \ref{measurecon} is a consequence of the Levy's isoperimetric inequality in the sphere, (see \cite{MS}).
  To see the idea behind the proof of assertions (\ref{eq1}) and (\ref{eq2}), let us consider a  domain $A$ with ${\rm vol }(A)=a \geq \frac{1}{2} {\rm vol}(\mathbb{S}^n(1))$ and let us fix $\epsilon \in [0,\frac{\pi}{2}]$. Then,  applying Levy's isoperimetric inequality, and having into account that, fixed $\epsilon \in [0,\frac{\pi}{2}]$, the $\epsilon$-fattening of a geodesic ball $B^{n,1}_{r}$ is the ball $(B^{n,1}_{r})_{\epsilon}=B^{n,1}_{r+\epsilon}$, for all $r \in [0,\pi/2-\epsilon)$, we have the inequality
  
\begin{equation}
\begin{aligned}
\frac{{\rm vol}\left(A_\epsilon\right)}{{\rm vol}\left(\mathbb{S}^n(1)\right)}&\geq  \frac{{\rm vol}\left(B^{n,1}_{\pi/2+\epsilon}\right)}{{\rm vol}\left(\mathbb{S}^n(1)\right)}= \frac{\int _0^{\frac{\pi}{2}+\epsilon} \sin^n t dt }{\int _0^\pi \sin^n t dt }\\
&=1-\frac{\int_{\frac{\pi}{2}+\epsilon}^{\pi} \sin^n t dt }{\int _0^\pi \sin^n t dt }
\end{aligned}
\end{equation}

To obtain equality (\ref{eq2}) from this point, we remark that , given $\epsilon \in [0,\frac{\pi}{2}]$,  in the intervals $[0, \frac{\pi}{2}+\epsilon] \subseteq [0,\pi]$, the function $\sin t$ attains its maximum of $1$ in $t=\frac{\pi}{2}$, so, when $n \to \infty$, the function $\sin^n t$ has a ``peak'' at $\frac{\pi}{2}$ and is negligible out of this value. Hence, we have that
\begin{equation}
%\begin{aligned}
 \lim_{n \to \infty}\frac{\int_{\frac{\pi}{2}+\epsilon}^{\pi} \sin^n t dt }{\int _0^\pi \sin^n t dt }=0
%1\geq \lim_{n \to \infty} \frac{{\rm vol}\left(A_\epsilon\right)}{{\rm vol}\left(\mathbb{S}^n(1)\right)}&\geq  \lim_{n \to \infty} \frac{\int _0^{\frac{\pi}{2}+\epsilon} (\sin t)^n dt }{\int _0^\pi (\sin t)^n dt   }=1
%\end{aligned}
\end{equation}
\noindent and
\begin{equation}
%\begin{aligned}
% \lim_{n \to \infty}\frac{\int_{\frac{\pi}{2}+\epsilon}^{\pi} \sin^n t dt }{\int _0^\pi \sin^n t dt }&=0\\
1\geq \lim_{n \to \infty} \frac{{\rm vol}\left(A_\epsilon\right)}{{\rm vol}\left(\mathbb{S}^n(1)\right)}\geq  \lim_{n \to \infty} \frac{\int _0^{\frac{\pi}{2}+\epsilon} (\sin t)^n dt }{\int _0^\pi (\sin t)^n dt   }=1
%\end{aligned}
\end{equation}
\medskip

Now, we are going to focus on a particular concentration of measure phenomenon in the sphere, that we call {\em equatorial}, what happens in this case around any totally geodesic equator $\mathbb{S}^{n-1}(1)$ of the sphere $ \mathbb{S}^{n}(1)$, and which can be deduced from the general concentration of measure described above. We will take  it as  a starting point for our definition of the extrinsic concentration of measure of an immersed submanifold in the sphere.

In order to describe it and to give an idea of its proof, we are going to consider the particular case constituted by the equator 
$$E=\{(x_1,\ldots,x_{n+1})\in \mathbb{S}^{n}(1)\, :\, x_{n+1}=0\}$$
together the domain $\Omega_\epsilon \subseteq  \mathbb{S}^{n}(1) $, (called $\epsilon$-strip around the equator $E$, see also figure \ref{fig:e-strip}), given by

\begin{equation*}
\Omega_\epsilon:=\left\lbrace  (x_1,\ldots,x_{n+1})\in \mathbb{S}^{n}(1)\, :\, -\sin(\epsilon)<x_{n+1}<\sin(\epsilon)\right\rbrace
\end{equation*}

\noindent where $\epsilon \in [0,\frac{\pi}{2}]$

We must remark at this point that any equator in the sphere can be placed in the position of $E$ using the accurated rotation, (an isometry which preserves volume and distances), so the argument we are going to show applies in fact for any equator.

\begin{center}
\begin{figure}
    \centering
    \includegraphics[scale=0.35]{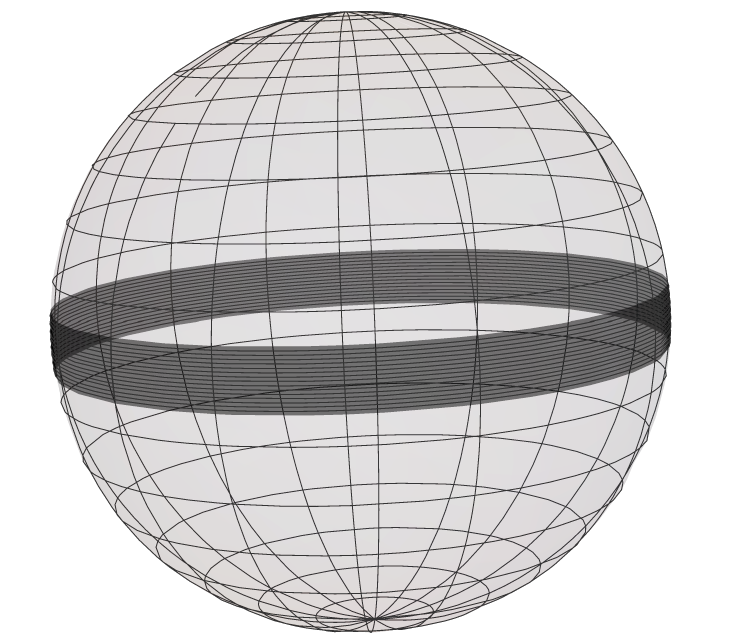}
    \caption{The $\epsilon$-strip of $\mathbb{S}^2(1)$ is the $\epsilon$-fattening of the equator of $\mathbb{S}^2(1)$}\label{fig:e-strip}
\end{figure}
\end{center}

Then, we can prove the following
\begin{theoremA}\label{equator}
Given the $\epsilon$-strip $\Omega_\epsilon$ around the equator $E$, 
\begin{equation}\label{eq2.5}
\frac{{\rm vol}\left(\Omega_\epsilon\right)}{{\rm vol}\left(\mathbb{S}^n(1)\right)}\geq 1-\sqrt{\frac{\pi}{2}}e^{-\epsilon^2\frac{n-1}{2}}
\end{equation}
 and hence, for $0<\epsilon<\frac{\pi}{2}$ we have
\begin{equation}\label{eq2.5.1}
\lim_{n\to \infty}\frac{{\rm vol}\left(\Omega_\epsilon\right)}{{\rm vol}\left(\mathbb{S}^n(1)\right)}=1.
\end{equation}
\end{theoremA}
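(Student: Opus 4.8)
The plan is to exhibit $\Omega_\epsilon$ as the intersection of the $\epsilon$-fattenings of the two closed hemispheres bounded by $E$ and then to invoke Theorem \ref{measurecon}. Observe first that $E$ is a null set, so one cannot apply Theorem \ref{measurecon} to $A=E$ directly; instead I would use
\[
A^{+}=\{x\in\mathbb{S}^{n}(1):x_{n+1}\geq 0\},\qquad
A^{-}=\{x\in\mathbb{S}^{n}(1):x_{n+1}\leq 0\},
\]
each of which has ${\rm vol}(A^{\pm})=\tfrac{1}{2}\,{\rm vol}(\mathbb{S}^{n}(1))$, so that Theorem \ref{measurecon} applies to either of them for every $\epsilon\in[0,\tfrac{\pi}{2}]$.

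The first step is an elementary distance computation in $\mathbb{S}^{n}(1)$. Writing a point as $x=(\cos\theta\,u,\sin\theta)$ with $u\in\mathbb{S}^{n-1}(1)$ and $\theta\in[-\tfrac{\pi}{2},\tfrac{\pi}{2}]$, so that $x_{n+1}=\sin\theta$, one checks that the point of the totally geodesic equator $E$ closest to $x$ is $(u,0)$, whence ${\rm dist}^{\mathbb{S}^{n}(1)}(x,E)=|\theta|$; and since any minimizing geodesic from a point with $\theta<0$ to $A^{+}$ must meet $E$, one gets ${\rm dist}^{\mathbb{S}^{n}(1)}(x,A^{+})=\max\{0,-\theta\}$. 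Because $t\mapsto\sin t$ is strictly increasing on $[-\tfrac{\pi}{2},\tfrac{\pi}{2}]$, this yields
\[
A^{+}_{\epsilon}=\{x\in\mathbb{S}^{n}(1):x_{n+1}>-\sin\epsilon\},\qquad
A^{-}_{\epsilon}=\{x\in\mathbb{S}^{n}(1):x_{n+1}<\sin\epsilon\},
\]
the second identity following from the first via the volume-preserving reflection $x_{n+1}\mapsto-x_{n+1}$, which also gives ${\rm vol}(A^{+}_{\epsilon})={\rm vol}(A^{-}_{\epsilon})$. In particular $\Omega_\epsilon=A^{+}_{\epsilon}\cap A^{-}_{\epsilon}$.

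The second step combines this with Theorem \ref{measurecon} through inclusion--exclusion: since $A^{+}_{\epsilon}\cup A^{-}_{\epsilon}\subseteq\mathbb{S}^{n}(1)$,
\[
{\rm vol}(\Omega_\epsilon)={\rm vol}(A^{+}_{\epsilon})+{\rm vol}(A^{-}_{\epsilon})-{\rm vol}(A^{+}_{\epsilon}\cup A^{-}_{\epsilon})\geq 2\,{\rm vol}(A^{+}_{\epsilon})-{\rm vol}(\mathbb{S}^{n}(1)).
\]
Dividing by ${\rm vol}(\mathbb{S}^{n}(1))$ and inserting the bound ${\rm vol}(A^{+}_{\epsilon})/{\rm vol}(\mathbb{S}^{n}(1))\geq 1-\sqrt{\pi/8}\,e^{-\epsilon^{2}(n-1)/2}$ supplied by (\ref{eq1}) gives
\[
\frac{{\rm vol}(\Omega_\epsilon)}{{\rm vol}(\mathbb{S}^{n}(1))}\geq 1-2\sqrt{\pi/8}\,e^{-\epsilon^{2}(n-1)/2}=1-\sqrt{\pi/2}\,e^{-\epsilon^{2}(n-1)/2},
\]
which is exactly (\ref{eq2.5}); and then (\ref{eq2.5.1}) is immediate, since for each fixed $\epsilon>0$ the right-hand side tends to $1$ as $n\to\infty$ while ${\rm vol}(\Omega_\epsilon)/{\rm vol}(\mathbb{S}^{n}(1))\leq 1$.

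There is no deep obstacle here; the only point requiring care is the verification in the first step that the $\epsilon$-fattening of a closed hemisphere is precisely the half-strip $\{x_{n+1}>-\sin\epsilon\}$ --- that is, that the relevant distance function equals $\max\{0,-\theta\}$ and that fattening the hemisphere does not ``wrap around'' the far pole --- and this is what ultimately produces the constant $\sqrt{\pi/2}$. A self-contained alternative would bypass Theorem \ref{measurecon} altogether and estimate ${\rm vol}(\Omega_\epsilon)/{\rm vol}(\mathbb{S}^{n}(1))=\bigl(\int_{\pi/2-\epsilon}^{\pi/2+\epsilon}\sin^{n}t\,dt\bigr)\big/\bigl(\int_{0}^{\pi}\sin^{n}t\,dt\bigr)$ directly, using the same ``peak at $\pi/2$'' reasoning employed for Theorem \ref{measurecon} in the Introduction, although recovering the clean constant that way takes a bit more work.
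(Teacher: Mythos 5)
Your proof is correct and follows essentially the same route the paper sketches: write $\Omega_\epsilon$ as the intersection of the $\epsilon$-fattenings of the two closed hemispheres, apply Theorem~\ref{measurecon} to each, and combine via the inclusion--exclusion bound using ${\rm vol}(A^{+}_{\epsilon})={\rm vol}(A^{-}_{\epsilon})$. You also fill in a detail the paper leaves implicit --- the verification that the $\epsilon$-fattening of a closed hemisphere is exactly $\{x_{n+1}>-\sin\epsilon\}$ --- which makes the argument fully self-contained.
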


This property comes from the concentration of measure stated in Theorem \ref{measurecon} in the following way: let us denote as $\mathbb{S}^{n+}$ the half sphere centered at the north pole and  as $\mathbb{S}^{n-}$ the half sphere centered at the south pole. Then, the $\epsilon$-strip around the equator can be expressed, in terms of the $\epsilon$-fattenings $\mathbb{S}^{n+}_{\epsilon}$ and $\mathbb{S}^{n-}_{\epsilon}$ as the intersection, 
$$\Omega_\epsilon=\mathbb{S}^{n+}_{\epsilon} \cap\mathbb{S}^{n-}_{\epsilon} $$

%\begin{figure}
 %   \centering
  %  \includegraphics[scale=0.25]{positiva}\quad\includegraphics[scale=0.3]{negativa}
   % \caption{The $\epsilon$-strip is the intersection between the $\epsilon$-fattening of the north hemisphere $\mathbb{S}^{+}_\epsilon$ and the $\epsilon$-fattening of the north hemisphere $\mathbb{S}^{-}_\epsilon$.}
   % \label{fig:asintersection}
%\end{figure}

\noindent Now, inequality (\ref{eq2.5}) follows applying Theorem \ref{measurecon} to the $\epsilon$-fattenings $\mathbb{S}^{n+}_{\epsilon}$ and $\mathbb{S}^{n-}_{\epsilon}$ and using the fact that ${\rm vol}(\mathbb{S}^{n+}_\epsilon)={\rm vol}(\mathbb{S}^{n-}_\epsilon)$.The conclusion is that for  \lq\lq very large'' dimension,  almost all measure in the Sphere is concentrated surrounding  the equator. 

This equatorial concentration of measure just described above is also known as \lq\lq fat equator effect'' (see \cite{Bengtsson}) 
and can be alternatively deduced by using another concentration of measure, namely, the concentration of measure around the mean/averaged value of a Lipschitz function defined on the sphere $\ese^n(1)$, as it is stated in Corollary V.2 of \cite{MS}:

\begin{theoremA}[Corollary V.2 in \cite{MS}]\label{thB}
Let $f: \ese^n(1) \rightarrow \erre$ be a Lipschitz function, with Lipschitz constant $\sigma$. Then there exists an absolute constant $\delta$ such that
\begin{equation}
\frac{{\rm vol}(\{ x \in \ese^n(1) / \vert f(x)- \frac{\int_{\ese^n} f d\mu}{{\rm vol}(\ese^n)}\vert\geq C\} }{{\rm vol}(\ese^n) } \geq 4 e^{-\delta C^2\frac{n+1}{\sigma^2}}
\end{equation}

\end{theoremA}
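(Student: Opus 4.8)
The plan is to deduce Theorem \ref{thB} from Theorem \ref{measurecon} by the classical two-step L\'evy--Milman argument: first prove that $f$ concentrates around a \emph{median} $m_f$, and then bound the gap between $m_f$ and the average $\bar f:=\frac{1}{{\rm vol}(\ese^n(1))}\int_{\ese^n}f\,d\mu$.

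\emph{Step 1 (concentration around the median).} Fix a median $m_f$ of $f$, so that both half-level sets $A^-:=\{x\in\ese^n(1):f(x)\le m_f\}$ and $A^+:=\{x\in\ese^n(1):f(x)\ge m_f\}$ have volume at least $\tfrac12{\rm vol}(\ese^n(1))$. Since $f$ is $\sigma$-Lipschitz for the intrinsic distance of $\ese^n(1)$, for every $\epsilon\in[0,\tfrac\pi2]$ one has $(A^-)_\epsilon\subseteq\{f< m_f+\sigma\epsilon\}$ and $(A^+)_\epsilon\subseteq\{f> m_f-\sigma\epsilon\}$. Applying estimate $(\ref{eq1})$ of Theorem \ref{measurecon} to $A^-$ and to $A^+$ (so that $\{f\ge m_f+\sigma\epsilon\}\subseteq\ese^n(1)\setminus(A^-)_\epsilon$, and similarly for the other side) and adding the two bounds gives
\begin{equation*}
\frac{{\rm vol}\bigl(\{x:|f(x)-m_f|\ge\sigma\epsilon\}\bigr)}{{\rm vol}(\ese^n(1))}\ \le\ 2\sqrt{\tfrac{\pi}{8}}\,e^{-\epsilon^2(n-1)/2}\ =\ \sqrt{\tfrac{\pi}{2}}\,e^{-\epsilon^2(n-1)/2},
\end{equation*}
that is, writing $C=\sigma\epsilon$, the tail estimate $\tfrac{{\rm vol}(\{|f-m_f|\ge C\})}{{\rm vol}(\ese^n(1))}\le\sqrt{\pi/2}\,e^{-C^2(n-1)/(2\sigma^2)}$ for $0\le C\le\sigma\pi/2$ (and trivially, since the ratio is $\le1$, for all $C$).

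\emph{Step 2 (from median to mean).} Writing $|\bar f-m_f|\le\frac{1}{{\rm vol}(\ese^n(1))}\int_{\ese^n}|f-m_f|\,d\mu=\int_0^\infty\frac{{\rm vol}(\{|f-m_f|>s\})}{{\rm vol}(\ese^n(1))}\,ds$ and inserting the Step 1 bound truncated by $1$, a one-line Gaussian-type integral estimate yields $|\bar f-m_f|\le K\sigma/\sqrt{n-1}$ for an absolute constant $K$. Then $\{|f-\bar f|\ge C\}\subseteq\{|f-m_f|\ge C-K\sigma/\sqrt{n-1}\}$, and we split into two regimes. If $C\ge 2K\sigma/\sqrt{n-1}$, this set is contained in $\{|f-m_f|\ge C/2\}$, whose relative volume is $\le\sqrt{\pi/2}\,e^{-C^2(n-1)/(8\sigma^2)}$ by Step 1; using $n-1\ge(n+1)/3$ for $n\ge2$ one checks $\sqrt{\pi/2}\,e^{-C^2(n-1)/(8\sigma^2)}\le 4\,e^{-\delta C^2(n+1)/\sigma^2}$ as soon as $\delta\le\tfrac1{24}$. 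If instead $C<2K\sigma/\sqrt{n-1}$, then $\delta C^2(n+1)/\sigma^2$ is bounded by an absolute constant, so $4\,e^{-\delta C^2(n+1)/\sigma^2}\ge1$ for $\delta$ small enough and the inequality holds because the left-hand ratio is $\le1$. Taking $\delta$ equal to the minimum of the two thresholds (an absolute constant; the finitely many low dimensions are absorbed into it) completes the proof.

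\emph{Main obstacle.} The genuine geometric content — L\'evy's spherical isoperimetric inequality, already packaged in Theorem \ref{measurecon} — is available, so no hard geometry remains; the delicate part is the bookkeeping in Step 2. One must verify that the median-to-mean shift is provably of order $\sigma/\sqrt n$ and hence negligible precisely on the range of $C$ where the target bound is non-trivial, and that the exponential constant $\delta$ and the prefactor $4$ can be chosen uniformly in $n$ — in particular the mild loss passing from $(n-1)$ to $(n+1)$ in the exponent is exactly what the room in $\delta$ and the constant $4$ are there to absorb. (Note that the asserted inequality is a concentration bound, so the displayed symbol is to be read as ``$\le$''.)
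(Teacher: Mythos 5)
The paper does not actually prove Theorem~\ref{thB}: it is quoted from \cite{MS} (Corollary~V.2), and the surrounding discussion merely records that the original derivation there is probabilistic (Maurey--Pisier, Gaussian variables), while its spherical specialisation is equivalent to concentration about the L\'evy mean (median), which in turn follows from Theorem~\ref{measurecon}. Your proposal carries out precisely this second, purely geometric route --- median concentration deduced from Theorem~\ref{measurecon}, then the standard $|\bar f-m_f|=O(\sigma/\sqrt{n})$ shift estimate and a two-regime split to absorb the shift and the passage from $n-1$ to $n+1$ into the absolute constants $\delta$ and $4$. This is consistent with, and makes explicit, what the paper only alludes to; it is also the most elementary self-contained argument given that Theorem~\ref{measurecon} is already stated. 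You are also right that the inequality sign in the displayed statement is a typo and must read ``$\le$'' --- the way the bound is actually used in the proof of Theorem~\ref{teo:mitjana} confirms this.

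Two small points to tighten, neither of which affects the conclusion. First, the Step~1 bound $\tfrac{{\rm vol}(\{|f-m_f|\ge s\})}{{\rm vol}(\ese^n(1))}\le\sqrt{\pi/2}\,e^{-s^2(n-1)/(2\sigma^2)}$ is established from Theorem~\ref{measurecon} only for $0\le s\le\sigma\pi/2$; the parenthetical ``trivially, since the ratio is $\le 1$, for all $C$'' does not give the claimed inequality for $\sigma\pi/2<C\le\sigma\pi$. What saves the day is that the oscillation of a $\sigma$-Lipschitz function on $\ese^n(1)$ is at most $\sigma\pi$ and $m_f$ lies in the range of $f$, so $\{|f-m_f|\ge C\}$ is empty for $C>\sigma\pi$; this is also how the large-$C$ regime is closed in Step~2. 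Second, and for the same reason, in the layer-cake integral for $|\bar f-m_f|$ the exponential majorant is only valid on $[0,\sigma\pi/2]$; one should split the integral there and on $[\sigma\pi/2,\sigma\pi]$ use the constant majorant $\sqrt{\pi/2}\,e^{-\pi^2(n-1)/8}$ (the tail is non-increasing), which is still $O(1/\sqrt{n-1})$, so $|\bar f-m_f|\le K\sigma/\sqrt{n-1}$ persists with an absolute $K$. With these adjustments the proof is complete.
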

We must remark that Theorem \ref{thB} is deduced from a previous result due to B. Maurey and G. Pisier, (see Theorem V.1 in \cite{MS}). In this theorem it is proved a concentration of measure of a multivariate and Lipschitz function which depends on several independent Gaussian variables and it is stated and proved using probabilistic arguments. The particularization of this result to  functions defined on the sphere $\ese^n(1)$ is equivalent to the concentration of measure of the values of any continuous function defined on this sphere $\ese^n(1)$ around its {\em Levy mean}, (see Lemma 2.3 in \cite{MS}), which in its turn is a corollary deduced from Theorem \ref{measurecon}.

Then, using the cosinus of the distance function to a fixed point in the sphere as our Lipschitz function and applying Theorem \ref{thB} we obtain:

\begin{theorem}\label{teo:mitjana}
 Given the $\epsilon$-strip $\Omega_\epsilon$ around the equator $E$, ($\epsilon \in (0,\pi/2)$),  there exists an absolute constant, (in the sense that do not depends on $n$), $\delta>0$ such that
\begin{equation}\label{eq3}
\frac{{\rm vol}\left(\Omega_\epsilon\right)}{{\rm vol}\left(\mathbb{S}^n(1)\right)}\geq 1-4e^{-\delta\bar\epsilon^2(n+1)}
\end{equation}
 where $\bar\epsilon=\sin(\epsilon) \in (0,1)$ and hence,
 \begin{equation}\label{eq3.1}
\lim_{n\to \infty}\frac{{\rm vol}\left(\Omega_\epsilon\right)}{{\rm vol}\left(\mathbb{S}^n(1)\right)}=1.
\end{equation}   
\end{theorem}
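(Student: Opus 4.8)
The plan is to apply Theorem \ref{thB} to a cleverly chosen Lipschitz function whose sublevel sets are exactly the strips $\Omega_\epsilon$. The natural candidate is $f(x) = x_{n+1}$, the last coordinate, viewed as a function on $\ese^n(1) \subset \erre^{n+1}$; equivalently, $f(x) = \cos(\dist^{\ese^n(1)}(x, N))$ where $N$ is the north pole. First I would check that this $f$ is Lipschitz with Lipschitz constant $\sigma = 1$: indeed the gradient of $x_{n+1}$ restricted to the sphere has norm $\sqrt{1 - x_{n+1}^2} \le 1$, so $\sigma = 1$ works. Next I would compute the Levy mean (or the averaged value) of $f$: by the antipodal symmetry $x \mapsto -x$ of the sphere, which is a measure-preserving isometry under which $f \mapsto -f$, the averaged value $\frac{1}{{\rm vol}(\ese^n)}\int_{\ese^n} f \, d\mu$ is zero. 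This is the key simplification — it makes the ``distance from the mean'' in Theorem \ref{thB} simply $|f(x)| = |x_{n+1}|$.

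With the mean equal to zero, Theorem \ref{thB} gives, for every $C > 0$,
\begin{equation*}
\frac{{\rm vol}\left(\left\{x \in \ese^n(1) : |x_{n+1}| \geq C\right\}\right)}{{\rm vol}(\ese^n(1))} \geq 4 e^{-\delta C^2 \frac{n+1}{1}},
\end{equation*}
wait — I must be careful about the direction of the inequality. As stated in the excerpt, Theorem \ref{thB} has a $\geq$ sign, which would be a lower bound on the measure of the ``far from the mean'' set; for the fat equator effect we need an \emph{upper} bound on that measure (so that its complement, the strip, has measure close to $1$). I would therefore use the correct form of the Maurey--Pisier / Lévy concentration inequality, namely that the measure of $\{|f - \text{mean}| \geq C\}$ is \emph{at most} $4 e^{-\delta C^2 (n+1)/\sigma^2}$; the statement of Theorem \ref{thB} in the excerpt appears to have a typographical reversal of the inequality, and I would apply it in its intended (upper-bound) form.

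Granting that, the complement of $\Omega_\epsilon$ in $\ese^n(1)$ is precisely $\{x : |x_{n+1}| \geq \sin\epsilon\} = \{x : |f(x)| \geq \bar\epsilon\}$ with $\bar\epsilon = \sin\epsilon$. Applying the concentration inequality with $C = \bar\epsilon$ and $\sigma = 1$ yields
\begin{equation*}
\frac{{\rm vol}(\ese^n(1) \setminus \Omega_\epsilon)}{{\rm vol}(\ese^n(1))} \leq 4 e^{-\delta \bar\epsilon^2 (n+1)},
\end{equation*}
and taking complements gives inequality (\ref{eq3}). Finally, since $\epsilon \in (0, \pi/2)$ forces $\bar\epsilon = \sin\epsilon \in (0,1)$, the exponent $-\delta\bar\epsilon^2(n+1) \to -\infty$ as $n \to \infty$, so the right-hand side of (\ref{eq3}) tends to $1$; combined with the trivial upper bound ${\rm vol}(\Omega_\epsilon) \leq {\rm vol}(\ese^n(1))$, this gives the limit (\ref{eq3.1}) by squeezing.

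I expect the only genuine subtlety to be the bookkeeping around Theorem \ref{thB}: confirming the correct (upper-bound) direction of the concentration inequality and pinning down exactly which normalization of the ``mean'' (Lévy mean versus averaged integral) is being used, since the excerpt's statement is slightly ambiguous on both points. The computation that the averaged value of $x_{n+1}$ vanishes by antipodal symmetry, and the identification of $\ese^n(1) \setminus \Omega_\epsilon$ with the superlevel set $\{|x_{n+1}| \geq \sin\epsilon\}$, are both routine. One may alternatively deduce this theorem directly from Theorem \ref{equator} (inequality (\ref{eq2.5})), but the point of Theorem \ref{teo:mitjana} is to exhibit the sharper exponent $n+1$ (rather than $n-1$) and the averaged-value mechanism, so I would present the proof via Theorem \ref{thB} as above.
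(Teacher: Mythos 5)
Your proof is correct and follows essentially the same route as the paper's Appendix: both apply Theorem~\ref{thB} to $f_p = \cos \circ d_p$ (with $p$ the north pole in your case) with Lipschitz constant $\sigma=1$ and mean zero, then read off the strip's measure by passing to the complement. The only cosmetic differences are that you establish the vanishing mean via antipodal symmetry rather than via $\Delta_{\ese^n}\cos(d_p)=-n\cos(d_p)$ and the divergence theorem, and you correctly flag that the $\geq$ sign in the displayed statement of Theorem~\ref{thB} must in fact be $\leq$ — a typographical reversal that the paper itself silently corrects when invoking Corollary V.2 of \cite{MS} in the Appendix.
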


 As  the statement of our main result is directly inspired in Theorem \ref{teo:mitjana} and for the sake of completeness we will prove this theorem in the Appendix \ref{appendix}.

\begin{remark}
 We would like to draw attention to the following observation, which only apparently constitutes a paradox that would refute the last property: let us consider  small $\epsilon >0$. Then, by virtue of the limit \eqref{eq3.1}, as the dimension increases, the equatorial band occupies more and more area of the sphere, so that the two complementary big caps to the band have a small volume compared to the total volume of the sphere. 
 
 Let us now consider a rotation of the equatorial strip, (an isometry of the sphere acting on the strip), in such a way that the image due to this rotation of the strip is a domain in the sphere that cuts the two previous big caps. Since the image by the isometry of the band is a domain with the same area as the band and it occupies a large amount of area of the sphere, all this area will be concentrated outside the intersection with the caps, (which have small area). 
  The conclusion is that the entire area of the band will be concentrated in two small squares, antipodal and disconnected, which are the result of intersecting the old band prior to the rotation with the new one, (see picture below).
 \begin{figure}
     \centering
     \includegraphics[scale=0.3]{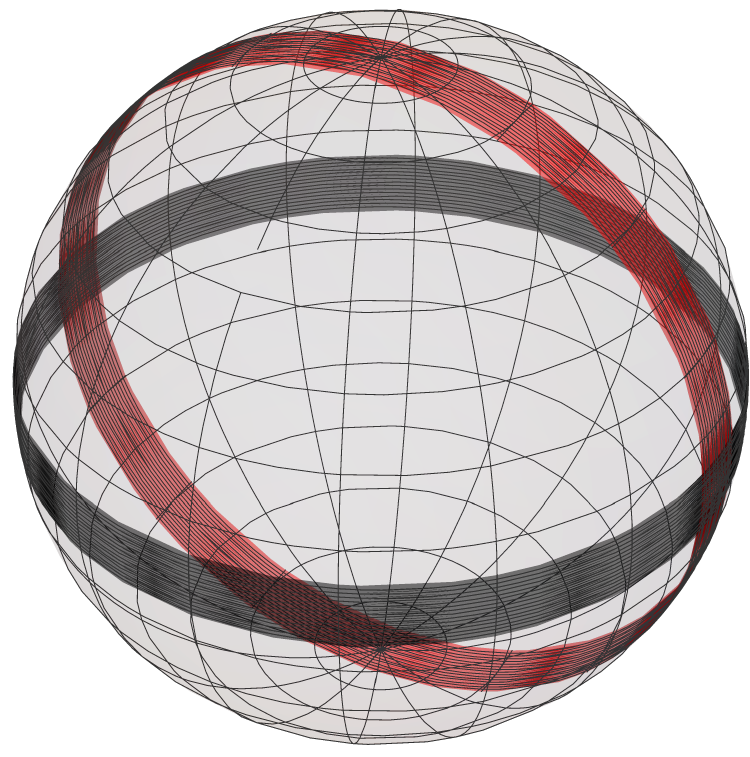}
     \caption{Two orthogonal equatorial bands which intersects in two small antipodal squares}
     \label{fig:enter-label}
 \end{figure}
 \begin{center}
\end{center}
 This fact not only does not contradict the corollary, but we must observe that, if we take, within one of these small squares, a spherical cap centered on it and circumscribed in it, we have that, when the dimension tends to infinity, the volume of this cap tends to zero while the area of the square tends to infinity.
\end{remark}

As we have mentioned before, we use the equatorial concentration of measure or \lq\lq fat equator effect'', as it has been described in Theorem \ref{teo:mitjana} to introduce the notion of  {\em extrinsic equatorial concentration of measure}  of a submanifold in the sphere. Let us consider $x: \Sigma^m \to \mathbb{S}^n(1)$ be a complete isometric immersion of the  $m$-dimensional manifold $\Sigma^m$ in the sphere $\mathbb{S}^n(1)$. Any totally geodesic equator $\mathbb{S}^{n-1}(1)$ on a sphere  $\mathbb{S}^n(1)$ can be described in terms of the intrinsic distance of its points to a given fixed point $p$, (and, in this sense, the equator $E$ that has served above to exemplify the phenomenon of concentration of the equatorial measure, is the set of points that are distant from the north pole by a distance equal to $\frac{\pi}{2}$). This description is given in the following way: given a point $p \in \mathbb{S}^n(1)$, \emph{the equator of $\mathbb{S}^n(1)$ with respect to $p$} is defined as 
   
 $$
    %\begin{aligned}
E(p):=\left\{x=(x_1,\cdots,x_{n+1})\in \mathbb{S}^n(1)\, :\,\,{\rm dist}^{\mathbb{S}^n(1)}(p,x)=\frac{\pi}{2} \right\},
%\end{aligned}
    $$
    where 
    $${\rm dist}^{\mathbb{S}^n(1)}(p,x)=\angle{(p,x)}=\arccos(p\cdot x)$$
    denotes the intrinsic distance in $\mathbb{S}^n(1)$ between the points $p$ and $x$. Then,   \emph{the strip around the equator $E(p)$} is defined as the set:

$$
\begin{aligned}
\Omega(p,\epsilon)&=\left\lbrace x\in \mathbb{S}^{n}(1)\, : \, \frac{\pi}{2}-\epsilon<{\rm dist}^{\mathbb{S}^{n}(1)}(p,x)=\angle{(p,x)}<\frac{\pi}{2}+\epsilon\right\rbrace\\&=\left\lbrace x\in \mathbb{S}^{n}(1)\, : \,   \cos\left({\rm dist}^{\mathbb{S}^{n}(1)}(p,x)\right)\in ( -\sin\epsilon,\sin\epsilon) \right\rbrace.
\end{aligned}
$$
\noindent Note that the strip around the equator $E$, $\Omega_\epsilon$, used in the estimations (\ref{eq2.5}) and (\ref{eq3}), is the strip $\Omega(p_N, \epsilon)$ where $p_N$ is the north pole of the sphere $\ese^n(1)$.

Our main result can be summarized asserting that there is an {\em extrinsic} equatorial concentration of measure when we consider minimal and compact submanifolds $\Sigma^m$ of any co-dimension in the Sphere $\mathbb{S}^n(1)$ and it is stated as follows:

\begin{theorem}\label{mainInt}
Let  $x: \Sigma^m \to \mathbb{S}^n(1)$ be an isometric and minimal immersion of the closed $m$-dimensional manifold $\Sigma$ in the sphere  $\mathbb{S}^n(1)$. Then, for any point $p \in \mathbb{S}^n(1)$ , any  $\epsilon \in (0,\pi/2)$ and any  absolute $\delta\in [0,\frac{1}{2})$, (in the sense that do not depends on $n$), we have 

\begin{equation}\label{eq4}
1\geq \frac{{\rm vol}\left(x^{-1}(\Omega(p,\epsilon))\right)}{{\rm vol}(\Sigma)}\geq  1- \sqrt{\frac{1}{1-2\delta}}e^{-\delta\bar\epsilon^2(m+1)}
\end{equation}
where $\bar\epsilon=\sin\epsilon \in (0,1)$. Hence, for each  $\epsilon \in (0,\pi/2)$,
\begin{equation}\label{eq5}
\lim_{m\to\infty}\frac{{\rm vol}\left(x^{-1}(\Omega(p,\epsilon))\right)}{{\rm vol}(\Sigma)}=1.
\end{equation}

\end{theorem}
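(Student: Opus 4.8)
\medskip
\noindent\textbf{Proof proposal.} The plan is to reduce \eqref{eq4} to a one–variable tail estimate. Note first that the intrinsic argument of Theorem~\ref{teo:mitjana} does not transfer directly, since one cannot assume any concentration of measure on the abstract closed manifold $\Sigma$; instead, what can be exploited is that minimality turns the ambient coordinate functions into Laplace eigenfunctions. Viewing $\ese^n(1)\subset\erre^{n+1}$, set $f\colon\Sigma\to\erre$, $f(q):=\langle p,x(q)\rangle$. Since $\langle p,x(q)\rangle=\cos\bigl({\rm dist}^{\ese^n(1)}(p,x(q))\bigr)$, the definition of the strip gives $x^{-1}(\Omega(p,\epsilon))=\{\,q\in\Sigma:\ |f(q)|<\bar\epsilon\,\}$, so, writing $\mu$ for the volume measure of $\Sigma$ normalized to total mass $1$,
\[
\frac{{\rm vol}\bigl(x^{-1}(\Omega(p,\epsilon))\bigr)}{{\rm vol}(\Sigma)}=1-\mu\bigl(\{\,|f|\ge\bar\epsilon\,\}\bigr);
\]
the bound $\le 1$ is then automatic, and the problem becomes to estimate $\mu(\{|f|\ge\bar\epsilon\})$ from above.

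Two structural facts drive the estimate. First, by Takahashi's theorem the minimality of $x$ gives $\Delta_\Sigma x=-m\,x$ coordinatewise in $\erre^{n+1}$, hence $\Delta_\Sigma f=-m\,f$. Second, $f$ is the restriction to $\Sigma$ of $y\mapsto\langle p,y\rangle$ on $\ese^n(1)$, whose spherical gradient has squared length $1-\langle p,y\rangle^2$, so projecting onto $T\Sigma$ yields the pointwise bound $|\nabla^\Sigma f|^2\le 1-f^2$. I would then bound the even moments $a_k:=\int_\Sigma f^{2k}\,d\mu$: integrating $f^{2k-1}\,\Delta_\Sigma f$ by parts over the closed manifold $\Sigma$ (no boundary term) and inserting the two facts gives $(m+2k-1)\,a_k\le(2k-1)\,a_{k-1}$ for $k\ge1$, whence, since $m+2j-1\ge m+1$,
\[
a_k\ \le\ \prod_{j=1}^{k}\frac{2j-1}{m+2j-1}\ \le\ \frac{(2k-1)!!}{(m+1)^{k}}\,;
\]
that is, $f$ has moments dominated by those of a centered Gaussian of variance $1/(m+1)$.

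Summing the exponential series — legitimate because $|f|\le 1$ on the compact $\Sigma$ — and using the identity $\sum_{k\ge0}\frac{(2k-1)!!}{k!}s^k=(1-2s)^{-1/2}$, one gets, for every $t\in[0,\tfrac{m+1}{2})$,
\[
\int_\Sigma e^{t f^{2}}\,d\mu\ \le\ \sum_{k\ge 0}\frac{(2k-1)!!}{k!}\Bigl(\tfrac{t}{m+1}\Bigr)^{k}=\Bigl(1-\tfrac{2t}{m+1}\Bigr)^{-1/2},
\]
and Chebyshev's inequality gives $\mu(\{|f|\ge\bar\epsilon\})=\mu(\{e^{tf^{2}}\ge e^{t\bar\epsilon^{2}}\})\le e^{-t\bar\epsilon^{2}}\bigl(1-\tfrac{2t}{m+1}\bigr)^{-1/2}$. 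Choosing $t=\delta(m+1)$ with $\delta\in[0,\tfrac12)$ yields exactly $\mu(\{|f|\ge\bar\epsilon\})\le\sqrt{\tfrac{1}{1-2\delta}}\,e^{-\delta\bar\epsilon^{2}(m+1)}$, which is \eqref{eq4}; fixing $\delta\in(0,\tfrac12)$ and letting $m\to\infty$ gives \eqref{eq5}.

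The step I expect to be most delicate is the moment recursion: one must carry out the integration by parts carefully, keep the correct direction of the inequality when $|\nabla^\Sigma f|^2\le 1-f^2$ is substituted into $m\,a_k=(2k-1)\int_\Sigma f^{2k-2}|\nabla^\Sigma f|^2\,d\mu$ (this works precisely because $f^{2k-2}$ is a square), and check the elementary bound $\prod_{j=1}^{k}\frac{2j-1}{m+2j-1}\le\frac{(2k-1)!!}{(m+1)^k}$, which is just $m+2j-1\ge m+1$. Everything afterwards — the generating-function identity, the term-by-term integration, and the Chebyshev estimate — is routine.
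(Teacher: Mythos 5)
Your proof is correct and reaches the paper's bound via essentially the same mechanism: Takahashi's identity $\Delta^\Sigma f=-mf$ for $f=\cos r_p$, integration by parts of $f^{2k-1}\Delta^\Sigma f$ over the closed $\Sigma$ together with $\Vert\nabla^\Sigma f\Vert^2\le 1-f^2$, giving the recursion $(m+2k-1)a_k\le(2k-1)a_{k-1}$ for the normalized even moments $a_k$, and finally a Chernoff--Markov estimate with $t=\delta(m+1)$ (this is precisely display \eqref{preskauxlasta} and what follows it). The one step where you diverge from the paper is in bounding $\int_\Sigma e^{tf^2}$. The paper (Lemma~\ref{keylemaimm}) transfers the moment recursion into a comparison with a totally geodesic $\ese^m(1)\subset\ese^n(1)$ --- where the recursion is an equality because $\Vert\nabla\widetilde{r}_p\Vert=1$ --- and then, in Lemma~\ref{lastalemma}, bounds the model's exponential integral by integrating the differential inequality $F'(s)/F(s)\le 1/(m+1-2s)$. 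You instead iterate the recursion directly, relax each factor $m+2j-1$ to $m+1$ to obtain the explicit Gaussian-moment bound $a_k\le(2k-1)!!/(m+1)^k$, and close with the generating-function identity $\sum_{k\ge0}\tfrac{(2k-1)!!}{k!}s^k=(1-2s)^{-1/2}$ for $s<1/2$. Both routes land on exactly $\sqrt{1/(1-2\delta)}\,e^{-\delta\bar\epsilon^2(m+1)}$. Your route is slightly more self-contained (no auxiliary comparison immersion, no ODE step) and makes the sub-Gaussian variance $1/(m+1)$ explicit, while the paper's route has the virtue of exhibiting the totally geodesic sphere as the extremal case where its intermediate inequality \eqref{lemmaformula1} is an equality.
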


We are going to finish this Introduction presenting a concentration of measure result of the fiber bundle projecting on the strip $\Omega(p, \epsilon)$ of a compact without boundary Riemannian submersion $\pi: \Sigma \rightarrow \ese^n(1)$ when this submersion is minimal, namely, when the (compact) fibers $\pi^{-1}(q) \subseteq \Sigma$ with $q \in \ese^n(1)$ are minimally immersed in $\Sigma$. This result, which can be considered a dual property of the extrinsic equatorial concentration of measure given by Theorem \ref{mainInt}, comes from both the structural duality of the equations that govern the geometry of the submersion with respect to the corresponding equations of the immersions and from the fact that the projection $\pi$ commutes with the Laplacians $\Delta^\Sigma$ and $\Delta^{\ese^n(1)}$ when the fibers of $\pi: \Sigma \rightarrow \ese^n(1)$ are minimal submanifolds of $\Sigma$.

\begin{theorem}\label{TeoSub}
Let  $\pi: \Sigma^m \to \mathbb{S}^n(1)$ be a Riemannian submersion with minimal fibers  of the closed $m$-dimensional manifold $\Sigma$ on to the sphere  $\mathbb{S}^n(1)$. Then, for any point $p \in \mathbb{S}^n(1)$ , any  $\epsilon \in (0,\pi/2)$ and any absolute $\delta\in [0,\frac{1}{2})$ we have 

\begin{equation}
1\geq \frac{{\rm vol}\left(\pi^{-1}(\Omega(p,\epsilon))\right)}{{\rm vol}(\Sigma)}=\frac{{\rm vol}\left(\Omega(p,\epsilon)\right)}{{\rm vol}\left(\mathbb{S}^n(1)\right)}\geq  1-\sqrt{\frac{\pi}{2}}e^{-\epsilon^2\frac{n-1}{2}}.
\end{equation}
Hence, for each  $\epsilon \in (0,\pi/2)$,
\begin{equation}
\lim_{n\to\infty}\frac{{\rm vol}\left(\pi^{-1}(\Omega(p,\epsilon))\right)}{{\rm vol}(\Sigma)}=1.
\end{equation}
\end{theorem}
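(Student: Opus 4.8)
\textbf{Proof proposal for Theorem \ref{TeoSub}.}

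The plan is to reduce the statement to the purely intrinsic estimate \eqref{eq2.5} for the sphere (Theorem \ref{equator}), by showing that the volume fraction upstairs in $\Sigma$ coincides exactly with the volume fraction downstairs in $\ese^n(1)$. The first step is to make precise the key identity $\operatorname{vol}(\pi^{-1}(\Omega(p,\epsilon)))/\operatorname{vol}(\Sigma)=\operatorname{vol}(\Omega(p,\epsilon))/\operatorname{vol}(\ese^n(1))$. For this I would invoke the coarea-type formula for a Riemannian submersion with \emph{minimal} fibers: if $\pi:\Sigma^m\to\ese^n(1)$ is such a submersion, then for any measurable $U\subseteq\ese^n(1)$,
\begin{equation*}
\operatorname{vol}(\pi^{-1}(U))=\int_{U} \operatorname{vol}\!\left(\pi^{-1}(q)\right)\, d\mu_{\ese^n(1)}(q).
\end{equation*}
The minimality of the fibers is exactly what one needs here: for a general Riemannian submersion the fiber volume can vary from point to point, but when every fiber $\pi^{-1}(q)$ is a minimal submanifold of $\Sigma$, the function $q\mapsto\operatorname{vol}(\pi^{-1}(q))$ is constant on the connected base $\ese^n(1)$. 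I would justify constancy by a first-variation argument: moving along a horizontal geodesic in the base, the variation of fiber volume is governed by the integral over the fiber of the mean curvature of the fiber (the fibers sweep out a variation whose normal component is the horizontal lift of the base direction), and this vanishes precisely because the fibers are minimal. Equivalently — and this is the viewpoint emphasized in the introduction — one uses that $\pi$ intertwines the Laplacians, $\Delta^{\Sigma}(f\circ\pi)=(\Delta^{\ese^n(1)}f)\circ\pi$, which forces the pushforward of the Riemannian measure of $\Sigma$ under $\pi$ to be a constant multiple of the Riemannian measure of $\ese^n(1)$; call that constant $c>0$. Taking $U=\ese^n(1)$ gives $\operatorname{vol}(\Sigma)=c\cdot\operatorname{vol}(\ese^n(1))$, and taking $U=\Omega(p,\epsilon)$ gives $\operatorname{vol}(\pi^{-1}(\Omega(p,\epsilon)))=c\cdot\operatorname{vol}(\Omega(p,\epsilon))$; dividing eliminates $c$ and yields the claimed equality.

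With the equality in hand, the remaining inequality is immediate: apply Theorem \ref{equator} to the strip $\Omega(p,\epsilon)$ around the equator $E(p)$. Since any equator of $\ese^n(1)$ is the image of $E$ under a rotation, which is a volume- and distance-preserving isometry, the estimate \eqref{eq2.5} holds verbatim with $\Omega_\epsilon$ replaced by $\Omega(p,\epsilon)$, giving
\begin{equation*}
\frac{\operatorname{vol}(\Omega(p,\epsilon))}{\operatorname{vol}(\ese^n(1))}\geq 1-\sqrt{\tfrac{\pi}{2}}\,e^{-\epsilon^2\frac{n-1}{2}},
\end{equation*}
and the upper bound by $1$ is trivial. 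Combining with the pushforward identity proves the displayed chain of (in)equalities, and the limit as $n\to\infty$ follows from \eqref{eq2.5.1}. Note that, unlike Theorem \ref{mainInt}, here the concentration is controlled by the base dimension $n$ rather than the total-space dimension $m$, and no $\delta$ actually enters the final bound — the parameter $\delta$ in the statement is vacuous, included only to parallel Theorem \ref{mainInt}.

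The main obstacle, and the only genuinely non-formal point, is establishing that minimality of the fibers forces the fiber-volume function to be constant (equivalently, that $\pi_{*}(d\mu_\Sigma)$ is a constant multiple of $d\mu_{\ese^n(1)}$). I would handle this carefully: locally trivialize $\pi$ over a geodesically convex ball in the base, write the metric on $\Sigma$ in the O'Neill form adapted to the submersion, and compute $\partial_t \operatorname{vol}(\pi^{-1}(\gamma(t)))$ for a base geodesic $\gamma$; the integrand is $-\int_{\pi^{-1}(\gamma(t))}\langle H_{\text{fiber}}, \widetilde{\gamma}'(t)\rangle\, d\mu_{\text{fiber}}$ where $\widetilde{\gamma}'$ is the horizontal lift, which is zero by $H_{\text{fiber}}\equiv 0$. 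Since the base is connected, the fiber volume is globally constant. An alternative route, if one prefers to quote rather than compute, is to cite the standard fact (used implicitly in the introduction) that for a Riemannian submersion with minimal fibers over a connected base the fiber volume is constant; but I would include the variational computation for self-containedness, mirroring the level of detail the paper uses elsewhere.
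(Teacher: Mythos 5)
Your proposal is correct, and it takes a genuinely different route from the paper's. Both arguments reduce to the same key identity
$\frac{{\rm vol}(\pi^{-1}(\Omega(p,\epsilon)))}{{\rm vol}(\Sigma)} = \frac{{\rm vol}(\Omega(p,\epsilon))}{{\rm vol}(\mathbb{S}^n(1))}$
and then invoke Theorem~\ref{equator}, but the mechanisms differ. You establish the identity by disintegrating the measure over the fibers via the coarea formula and showing, by a first-variation argument, that minimality of the fibers forces $q\mapsto{\rm vol}(\pi^{-1}(q))$ to be constant on the connected base; this makes the geometric content transparent and would apply verbatim to any connected compact base manifold. The paper instead works with the radial function $r_p=d_p\circ\pi$, notes $\|\nabla^\Sigma r_p\|\equiv 1$, introduces the test function $E(q)=\int_0^{r_p(q)}V(t)/V'(t)\,dt$ with $V(t)={\rm vol}(B_t(p))$, verifies $\Delta^\Sigma E=1$ using equation~\eqref{eq:lapsub}, and integrates over the sublevel sets $D_t=\pi^{-1}(B_t(p))$ with the divergence theorem to obtain $U(t)=\tfrac{V(t)}{V'(t)}U'(t)$ for $U(t)={\rm vol}(D_t)$, whence $U/V$ is constant; this is tailored to the rotational symmetry of $\mathbb{S}^n(1)$ and needs no submersion coarea formula. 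Minimality enters your argument via vanishing mean curvature of the fibers, and the paper's via the Laplacian-intertwining property that underlies~\eqref{eq:lapsub} — two faces of the same hypothesis. Your observations that $\delta$ is vacuous here and that the decay rate is governed by the base dimension $n$ rather than $m$ are also correct.
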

\bigskip

We shall finish this Introduction with some general observations concerning the statements of Theorems \ref{mainInt} and \ref{TeoSub}.
\bigskip

\begin{remark}\label{remarkone}\

\begin{enumerate}
\item In contrast to the intrinsic approach, in the extrinsic case given by Theorem \ref{mainInt}, the proof of inequality \eqref{eq4} and equality \eqref{eq5} is based on the divergence theorem, rather than on the isoperimetric inequality, (as it is the case of the proof of inequality (\ref{eq2.5}) and equality (\ref{eq2.5.1}) in Theorem \ref{equator}, proved as a corollary of Theorem \ref{measurecon}) or rather than on probabilistic arguments using the Gaussian distribution, (as it is done in Theorem \ref{teo:mitjana}, deduced as a direct application of Theorem \ref{thB}).
\medskip

\item We can recover the {\em intrinsic} equality  (\ref{eq3.1}) in Theorem \ref{teo:mitjana} as a corollary of the {\em extrinsic} equality (\ref{eq5}) in Theorem \ref{mainInt} , taking $\Sigma=\mathbb{S}^n(1)$. In fact, if $\Sigma=\mathbb{S}^n(1)$, then we can use $x$ as the identity map  and hence, 
$$x^{-1}(\Omega(p,\epsilon))=x(\Sigma) \cap \Omega(p,\epsilon) =\Omega(p,\epsilon)$$

\noindent so, from equality (\ref{eq5}), we obtain
$$
\lim_{n\to\infty}\frac{{\rm vol}\left(x^{-1}(\Omega(p,\epsilon))\right)}{{\rm vol}(\Sigma)}=\lim_{n\to\infty}\frac{{\rm vol}\left(\Omega(p,\epsilon)\right)}{{\rm vol}(\mathbb{S}^n(1))}=1
$$
%\noindent Hence, we have an alternative way to prove the intrinsic equatorial concentration of measure in the sphere which is not based in the isoperimetric inequality.
But it is important to remark that inequality (\ref{eq3}) in Theorem \ref{teo:mitjana} is not the same nor it is  a consequence of inequality (\ref{eq4}) in Theorem \ref{mainInt} nor, on the other hand, does inequality \eqref{eq2.5} can  be derived directly from \eqref{eq3}. Note that the constants $\delta$ are not the same in Theorem \ref{mainInt} and Theorem \ref{teo:mitjana}.
\medskip

\item When we consider $\Sigma=\mathbb{S}^n(1)$ in Theorem \ref{mainInt}, we can compare inequality \eqref{eq4}  with inequality \eqref{eq3} in Theorem \ref{teo:mitjana}. To do this comparison, we must have into account that the constants $\delta$ are not the same in both theorems, (which, on the other hand, limits the scope of this comparison), and, moreover, that while $\delta$ is a fixed constant in Theorem  \ref{teo:mitjana}, in Theorem \ref{mainInt} the values of the constant $\delta$ ranges over the interval $[0,\frac{1}{2})$. If we start our comparison considering the bound  \eqref{eq3} in Theorem \ref{teo:mitjana}, namely, the bound 
$$\frac{{\rm vol}\left(\Omega_\epsilon\right)}{{\rm vol}\left(\mathbb{S}^n(1)\right)}\geq 1-4e^{-\delta\bar\epsilon^2(n+1)}$$
\noindent we can to choose the accurate value of $\delta=\delta_0$ in Theorem  \ref{mainInt} in order to have the bound $1- \sqrt{\frac{1}{1-2\delta_0}}e^{-\delta_0\bar\epsilon^2(n+1)}=1- 4e^{-\delta_0\bar\epsilon^2(n+1)}$. This value is
$
\delta_0=\frac{15}{32}
$ and inequality \eqref{eq4} will be 
$$\frac{{\rm vol}\left(x^{-1}(\Omega(p,\epsilon))\right)}{{\rm vol}(\ese^n(1))}\geq  1- 4e^{-\frac{15}{32}\bar\epsilon^2(n+1)}.$$
\medskip

\item  It should rest open to obtain, using our techniques, the statement of an {\em intrinsic} concentration of measure for compact and minimal submanifolds of the sphere, namely, if $A \subseteq \Sigma$ is a domain with ${\rm vol}(A) \geq \frac{1}{2} {\rm vol}(\Sigma)$, then the $\epsilon$-fattening of $A$ in $\Sigma$ satisfies the inequality
\begin{equation*}
1 \geq \frac{{\rm vol}(A_\epsilon)}{{\rm vol}(\Sigma)}\geq 1-\sqrt{\frac{\pi}{8}}e^{-\epsilon^2(m-1)/2}
\end{equation*}
\noindent and hence, for all $\epsilon \in [0,\frac{\pi}{2}]$, 
\begin{equation*}
\lim_{m\to \infty} \frac{{\rm vol}(A_\epsilon)}{{\rm vol}(\Sigma)}= 1
\end{equation*}

Here, the $\epsilon$-fattening of $A$ in $\Sigma$ should be defined as
$$A_\epsilon:=\left\{x\in \Sigma\, :\, {\rm dist}^{\Sigma}(x,A)<\epsilon\, \right\}.
$$

In fact, there is a result in this vein given as a Corollary of Theorem 6.9 in \cite{MS}, where an intrinsic concentration of measure in the line of Theorem \ref{measurecon} is stated in terms of the {\em first Dirichlet eigenvalue} of a compact and connected Riemannian manifold. In this sense, and if we consider a compact and minimal submanifold in the sphere $x:\Sigma^m \rightarrow \ese^n(1)$ as a manifold in itself, then, applying Theorem 6.9 we obtain, for any domain $A\subset \Sigma$ with $\frac{{\rm vol}(A)}{{\rm vol}( \Sigma)}=a$, the following concentration of measure
\begin{equation}\label{eq:Lambda}
   \frac{{\rm vol}\left(A_\epsilon\right)}{{\rm vol}( \Sigma)}\geq 1-(1-a^2)e^{-\epsilon\sqrt{\lambda_1(\Sigma)}\ln(1+a)}, 
\end{equation}
\noindent where $\lambda_1(\Sigma)$ is the first (non-zero) eigenvalue for the Laplacian of $\Sigma$.  
\medskip

\item We should note at this point that the concentration of measure (\ref{eq:Lambda}) is not related, a priori, with the dimension of the submanifold, so we can't conclude the description of the behaviour at infinity of the volume of the fattenings or the strips around the equators given by equalities \eqref{eq2}, \eqref{eq2.5.1}, \eqref{eq3.1} and \eqref{eq4}. However, looking again to inequality (\ref{eq:Lambda}), the things should change if we could obtain estimations for $\lambda_1(\Sigma)$ in terms of the dimension of the submanifold. In this vein, in problem 100 of \cite{YauMR0645728}, S.T Yau conjectured that the first eigenvalue of a compact minimal embedded hypersurface $\Sigma\subset \mathbb{S}^{n}$ is given by the dimension of $\Sigma$, namely
$$\lambda_1(\Sigma)=n-1.$$
If  Yau's conjecture is true it would  imply, by using inequality \eqref{eq:Lambda} that for any domain $A\subset \Sigma\subset \mathbb{S}^{n}$  of a minimal $n-1$-dimensional hypersurface, with 
 $\frac{{\rm vol}(A)}{{\rm vol}( \Sigma)}\geq \frac{1}{2}$ there will be two constants $c_1,c_2\in \mathbb{R}_{>0}$ such that
 \begin{equation}\label{eq:yau1}
   \frac{{\rm vol}\left(A_\epsilon\right)}{{\rm vol}( \Sigma)}\geq 1-c_1e^{-c_2\epsilon\sqrt{n-1}}. 
\end{equation}
\noindent so we should have an intrinsic concentration of measure for $\Sigma$ which could be considered a Wirtinger's type inequality for compact and minimal embedded hypersurfaces of the sphere, (see \cite{Gromov}).

The rate of decline with respect to the dimension in this result  should be slower than our rate of decrease on the dimension given by \eqref{eq4} but it should be true for other domains than any extrinsic fattening of the equator.
\medskip

\item The Yau's conjecture remains unsolved but it is known to be true in several  cases: for instance for homogeneous hypersurfaces (see \cite{MR0742598}, \cite{MR0814078}) and for certain isoparametric minimal hypersurfaces (see \cite{MR3080491}).   On the other hand, Choi and Wang proved in \cite{Choi1983559} that for compact, embedded  and minimal hypersurfaces $\Sigma\subset\mathbb{S}^{n+1}$ there exists the following lower bound in the first eigenvalue
$$\lambda_1(\Sigma)\geq \frac{n}{2}.$$
This lower bound is enough to state the concentration of measure provided by inequality \eqref{eq:yau1} in this particular context, namely, when we consider compact, embedded  and minimal hypersurfaces $\Sigma\subset\mathbb{S}^{n+1}$. Observe  that this concentration phenomena would be attained for fattenings of domains of any kind in compact, embedded and minimal hypersurfaces of the sphere, in contrast to the results presented in this paper which would be valid for a specific type of equatorial domains in the otherwise broader context of minimal and compact immersions of any codimension in the sphere. Observe moreover that, as we have noted before, the  rate of decline in the concentration of measure provided by inequality \eqref{eq:yau1}  is slower than the rate of decrease provided  by  inequality \eqref{eq4}.
\item Note that, when the case of Riemannian submersions $\pi: \Sigma \rightarrow \ese^n(1)$ with minimal fibers is considered, and unlike what happens with minimal and embedded immersions of codimension 1 in the sphere, (see observation (6) in Remark \ref{remarkone}), the first eigenvalue of the Laplacian is not bounded from below in general. 
Let us take for instance the family of Riemannian manifolds $\Sigma_\delta:=(\mathbb{S}^n(1)\times \mathbb{S}^1, g_{\mathbb{S}^{n}(1)}+\delta^2 g_{\mathbb{S}^{1}(1)})$, and the family of Riemannian submersions $\pi_\delta: \Sigma_\delta \rightarrow \ese^n(1)$ given by
$$
(p,q)\mapsto \pi_\delta(p,q)=p \,\,\forall \delta \in \erre_{>0}
$$
All these Riemannian submersions are minimal, indeed totally geodesic ones, and, as $\Sigma_\delta:=(\mathbb{S}^n(1),g_{\mathbb{S}^{n}(1)}) \times (\mathbb{S}^1,\delta^2g_{\mathbb{S}^{1}(1)})$  for any $\delta \in \erre$ we have that
$$\lambda_1(\Sigma_\delta)\leq {\rm min}\{\lambda_1(\mathbb{S}^n(1),g_{\mathbb{S}^{n}(1)}),\lambda_1(\mathbb{S}^1,\delta^2g_{\mathbb{S}^{1}(1)})\}={\rm min}\{n, \frac{1}{\delta^2}\}$$
\noindent so, for any $a\in (0,n]$ there is a value of $\delta$ such that
$$
\lambda_1(\Sigma_\delta)<a.
$$
Thence, in contrast to what happens for minimal hypersurfaces embedded in the sphere, there are no lower bounds for the first eigenvalue of a compact manifold which admits a Riemannian submersion to the sphere with minimal fibers.
\end{enumerate}
\end{remark}

%%%%%%%%%%%%%%%%%
% \subsection{Outline}
%%%%%%%%%%%%%%%%%
\subsection{Outline}\

The structure of the paper can be outlined as follows: after this Introduction where the notions related with the intrinsic and extrinsic equatorial concentration in the sphere has been presented, as well as a first glimpse of our main results, (Theorems \ref{mainInt} and \ref{TeoSub}), we shall follow in the following Section \S.2 with the study of the relative position of the closed and minimal submanifolds in the sphere with respect the equators of this sphere, proving a two-piece property which is satisfied by these closed and minimal submanifolds. We have proved, using the same techniques, the corresponding result which is satisfied by the total space $\Sigma$ of a Riemannian submersion. In Section \S.3, we shall state and prove our main results, and we have added an Appendix I with a proof of Theorem \ref{teo:mitjana}, because although we think that it is a folk result, we have not found a proof of it in the literature of the field.

%%%%%%%%%%%%%%%%%
% \subsection{Acknowledgement}
%%%%%%%%%%%%%%%%%
%\subsection{Acknowledgement}\

%The authors  wish to thank...

%%%%%%%%%%%%%%%%%%
%Section: Preliminaires
%%%%%%%%%%%%%%%%%%%%
\section{A high-dimensional two-piece property for parabolic and minimal submanifolds of the sphere $\mathbb{S}^n(1)$ }\

%%%%%%%%%%%%%%%%%%%%%%%%%%%%%%
%\subsection{Equator of a Sphere with respect a point}\
%%%%%%%%%%%%%%%%%%%%%%%%%%%%%%
%\medskip

We start with the definition of the {\em equator with respect a point} in the sphere $\ese^n(1)$, and the {\em strip around an equator }.

\begin{definition}
Given a point $p \in \mathbb{S}^n(1)$, \emph{the equator of  $\mathbb{S}^n(1)$ with respect to $p$} is defined as 
   
  $$
    \begin{aligned}
E(p):&=\left\{x=(x_1,\cdots,x_{n+1})\in \mathbb{S}^n(1)\, :\,\,{\rm dist}^{\mathbb{S}^n(1)}(p,x)=\frac{\pi}{2} \right\}\\&=
\left\{(x_1,\cdots,x_{n+1})\in \mathbb{S}^n(1)\, : \,\,\, p_1x_1+\cdots+p_{n+1}x_{n+1}=0\right\}
\end{aligned}
    $$
    \end{definition}
    
    In its turn, the {\em strip around an equator } is defined as:
    
\begin{definition}
The strip around the equator $E(p)$ is defined as the set:
    $$
\begin{aligned}
\Omega(p,\epsilon)&=\left\lbrace x\in \mathbb{S}^{n}(1)\, : \, \frac{\pi}{2}-\epsilon<{\rm dist}^{\mathbb{S}^{n}(1)}(p,x)=\angle{(p,x)}<\frac{\pi}{2}+\epsilon\right\rbrace\\&=\left\lbrace x\in \mathbb{S}^{n}(1)\, : \,   \cos\left({\rm dist}^{\mathbb{S}^{n}(1)}(p,x)\right)\in ( -\sin\epsilon,\sin\epsilon) \right\rbrace
\end{aligned}
$$
Hence, as $\epsilon \in (0,\frac{\pi}{2})$, if we put $\bar \epsilon=\sin \epsilon \in (0,1)$, then
$$
\Omega(p,\epsilon) =\left\lbrace x\in \mathbb{S}^{n}(1)\, : \, -\bar \epsilon<\cos\left({\rm dist}^{\mathbb{S}^{n}(1)}(p,x)\right)<\bar \epsilon\right\rbrace.
$$
\end{definition}

With these definitions at hand, it is evident that for any point $q\in E(p)$, 
        $$
{\rm dist}^{\mathbb{S}^n(1)}(p,q)=\frac{\pi}{2}={\rm dist}^{\mathbb{S}^n(1)}(-p,q)
        $$
where $-p$ denotes the antipodal point of $p \in \mathbb{S}^{n}(1)$. It is also evident  that any equator in the sphere $\mathbb{S}^n(1)$ is an equator with respect to some $p$.
        
     Let us consider now $x: \Sigma^m \to \mathbb{S}^n(1)$  a complete isometric immersion of the  $m$-dimensional manifold $\Sigma^m$ in the sphere.  A natural question that arises in this context is to ask ourselves about the position of the submanifold with respect to the equators of the ambient sphere. 
     
     In fact, when $\Sigma=\mathbb{S}^{1}(1)$ is a great circle isometrically immersed in the sphere $\mathbb{S}^{2}(1)$, it is not hard to show that then $\Sigma$ intersects {\em all} the equators of $ \mathbb{S}^{2}(1)$, (which are the great circles of the $2$-dimensional sphere).
     
     When the dimension of the ambient sphere is $3$, A. Ros showed in \cite{R} a {\em Two-piece property}, which implies a more general intersection result to that described above for the two-dimensional case. Namely, every equator of the 3-sphere $\mathbb{S}^3(1)$ divides each embedded closed (compact, without boundary) non-totally geodesic minimal surface in exactly two open connected pieces.

  Therefore,  a natural question which arises in this context is following: under what conditions, (topological, analytical, curvature restrictions, etc), it is guaranteed, (for any dimension of the ambient sphere and any co-dimension of the immersed submanifold), that the submanifold cuts all the equators of the ambient sphere, \emph{i.e.},
        $$x(\Sigma) \cap E(p) \neq \emptyset\,\,\,\forall p \in \mathbb{S}^n(1)$$ and hence, will intersect all the $\epsilon$-strips
        $$x(\Sigma) \cap \Omega(p,\epsilon) \neq \emptyset, (\text{i.e.}, x^{-1}(\Omega(p,\epsilon))\neq \emptyset\,\,\,\forall p \in \mathbb{S}^n(1))$$
   
    Our first result  takes the form of a kind of a \lq\lq high dimensional two-piece property", and describes the position of the submanifold with respect the equators of $\mathbb{S}^n(1)$ and their strips, when this submanifold is minimal and parabolic:
 
\begin{theorem}\label{twopiecegeneral}

Let $x:\Sigma^m \to \mathbb{S}^n(1)$ be a complete,  minimal and isometric immersion, and let $p$ be any point of $\mathbb{S}^n(1)$. Suppose that $\Sigma$ is parabolic. Then
    \begin{itemize}
        \item Either $x(\Sigma)\subseteq E(p)$, 
        \item or, $x(\Sigma)\cap E(p)\neq \emptyset$
            \end{itemize}
            and  hence, in any case,  $x(\Sigma) \cap \Omega(p,\epsilon) \neq \emptyset$, (i.e. $x^{-1}(\Omega(p,\epsilon))\neq \emptyset$).
\end{theorem}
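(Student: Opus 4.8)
The plan is to analyze the restriction to $\Sigma$ of the ambient \emph{height function} $f_p := \langle p, x\rangle$, where $x:\Sigma^m\to\ese^n(1)$ is regarded as an $\erre^{n+1}$-valued map. First I would record two elementary facts: that $f_p = \cos\!\left({\rm dist}^{\ese^n(1)}(p,x(\cdot))\right)$, so that the zero set $f_p^{-1}(0)$ is exactly $x^{-1}(E(p))$; and that $\lvert f_p\rvert\le 1$ on $\Sigma$ because $\lvert x\rvert\equiv 1$, so $f_p$ is a bounded function. The crucial input is the Takahashi-type identity: since $x:\Sigma^m\to\ese^n(1)\subset\erre^{n+1}$ is a \emph{minimal} isometric immersion, the $\erre^{n+1}$-valued Laplacian of $x$ equals $m$ times the mean curvature vector of $\Sigma$ in $\erre^{n+1}$, which reduces to $-x$ (the $\ese^n(1)$-component of the mean curvature vanishes by minimality, and $\ese^n(1)$ itself has mean curvature vector $-x$ in $\erre^{n+1}$). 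Taking the inner product with the fixed vector $p$ gives the scalar eigenfunction equation
\begin{equation*}
\Delta^\Sigma f_p = -m\,f_p \qquad\text{on }\Sigma .
\end{equation*}

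Next I would establish the dichotomy. We may assume $\Sigma$ is connected, otherwise the argument is run on each connected component (again complete, minimal and parabolic). If $f_p\equiv 0$, then $x(\Sigma)\subseteq E(p)$ and the first alternative holds. Otherwise $f_p\not\equiv 0$, and I claim $f_p$ must vanish somewhere on $\Sigma$. Suppose not; then by connectedness $f_p$ has constant sign. If $f_p>0$ everywhere, then $f_p$ is a positive, bounded function with $\Delta^\Sigma f_p=-m f_p\le 0$, i.e.\ a positive (bounded) superharmonic function on the parabolic manifold $\Sigma$; hence $f_p$ is constant, and then $0=\Delta^\Sigma f_p=-m f_p$ forces $f_p\equiv 0$, contradicting $f_p>0$. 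If instead $f_p<0$ everywhere, the same reasoning applied to $-f_p>0$ (equivalently: $f_p$ is then bounded above and subharmonic) yields the identical contradiction. Therefore $f_p$ vanishes at some point, i.e.\ $x(\Sigma)\cap E(p)\neq\emptyset$, which is the second alternative.

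To close the proof, observe that $E(p)\subseteq\Omega(p,\epsilon)$ for every $\epsilon\in(0,\pi/2)$, since $\cos(\pi/2)=0\in(-\bar\epsilon,\bar\epsilon)$ with $\bar\epsilon=\sin\epsilon$. Hence in both alternatives $x(\Sigma)\cap\Omega(p,\epsilon)\neq\emptyset$, equivalently $x^{-1}(\Omega(p,\epsilon))\neq\emptyset$, as asserted. The only non-formal ingredient, and the step I expect to require care, is the use of parabolicity through its potential-theoretic characterization, namely that on a parabolic manifold every nonnegative (or bounded above) superharmonic function is constant; I would invoke this as a standard fact and be careful to fix the sign convention of $\Delta^\Sigma$ so that ``superharmonic'' means ``$\Delta^\Sigma f\le 0$''. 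The Takahashi identity and the connectedness/intermediate-value step are routine.
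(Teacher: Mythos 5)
Your proof is correct and follows essentially the same route as the paper's: the Takahashi identity $\Delta^\Sigma \langle p, x\rangle = -m\langle p, x\rangle$, restriction to a connected component, the sign/connectedness argument to get a constant-sign case, and the parabolicity criterion (no nonconstant bounded sub- or superharmonic functions) to force a contradiction. Your treatment is slightly more explicit about why nonvanishing implies constant sign and about the constant-forces-zero step, but the mechanism and key ingredients coincide with the paper's proof.
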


\begin{proof}
Let $K$ be a connected component of $\Sigma$. Since $\Sigma$ is parabolic, $K$ is a parabolic manifold as well.
Given $p\in \mathbb{S}^n(1)$, the \emph{extrinsic distance function} with respect to $p$ is the function given by
$$
\begin{aligned}
r_p:K \to \mathbb{R}, \quad q\mapsto r_p(q):&={\rm dist}^{\mathbb{S}^n(1)}(p,x(q))\\&=\angle{(p,x(q))}=\arccos(p\cdot x(q))
\end{aligned}
$$
so, given $q \in K$, with $x(q)=(x_1,...,x_{n+1}) \in \mathbb{S}^n(1)$, we have that
$$
\cos(r_p(q))=p\cdot x(q) = p_1x_1+\cdots+p_{n+1}x_{n+1}
$$
 and hence, $\cos\circ r_p$ is smooth on $K$.   We need the following
\begin{lemma}\label{coslema}
Let $x:K^m \to \mathbb{S}^n(1)$ be a complete and minimal isometric immersion, and let $p$ be any point of $\mathbb{S}^n(1)$. Then, for all $q \in K$,
 \begin{equation}\label{cos}
%\begin{aligned}
\Delta^K \cos(r_p(q))=-m\cos(r_p(q))
%\end{aligned}
\end{equation}
\end{lemma}
\begin{proof}
  Applying Takahasi's Lemma, (see \cite{T}), we have,  as $x$ is minimal, that 
  $$\Delta^K x_i=-m x_i\,\,\forall i=1,...,n+1$$
  so we conclude 
$$
\begin{aligned}
\Delta^K \cos(r_p(q))=&p_1\Delta^\Sigma x_1+\cdots+p_{n+1}\Delta^\Sigma x_{n+1}\\=&-m\left(p_1x_1+\cdots+p_{n+1}x_{n+1}\right)=-m\cos(r_p(q))
\end{aligned}
$$\end{proof}
To prove  the Theorem, first, let us observe that, if the extrinsic distance function is constant,
$\cos\circ r_p(q)=c \,\,\forall q \in \Sigma$, then 
$$0=\Delta^K \cos(r_p(q))=-m\cos\circ r_p(q)\,\forall q \in K,$$
so $r_p(q)=\frac{\pi}{2} \,\,\forall q \in \Sigma$ and hence, in this case we have that $$x(K) \subseteq E(p).$$

This implies that the extrinsic distance function is a constant function in $K$ if and only if $x(K)\subseteq E(p)$.
Now,  let us suppose that $x(K) \not\subseteq E(p)$ and that $x(K)\cap  E(p)=\emptyset$. Then $\cos\circ r_p$ is not constant on $K$ and, moreover, as $K$ is connected,  if $x(K)\cap  E(p)=\emptyset$ then $r_p(q) \in [0,\frac{\pi}{2})\,\,\forall q \in K$ or $r_p(q) \in (\frac{\pi}{2},\pi]\,\,\forall q \in K$. To see this last assertion, let us note that, in case there are points $q_1, q_2 \in K$ such that $r_p(q_1) \in [0,\frac{\pi}{2})$ and  $r_p(q_2) \in (\frac{\pi}{2},\pi]$, then we can conclude that there are points $q_1, q_2 \in K$ such that $\cos(r_p(q_1)) >0$ and  $\cos (r_p(q_2)) <0$, and therefore, $K$ is not connected because $K=\{ q \in K: \cos(r_p(q)) >0\}\cup \{ q \in K: \cos(r_p(q))< 0\} $. We can conclude therefore that $\cos\circ r_p$ is a bounded non-constant subharmonic, (superharmonic), function defined on $K$, which is a contradiction with the parabolicity of $K$. Hence, if  $x(K) \not\subseteq E(p)$, then $x(K)\cap  E(p)\neq \emptyset$, which implies that $x(\Sigma)\cap  E(p)\neq \emptyset$.\end{proof}
Note that, given the compact, (and hence, parabolic), and minimal submanifold $\Sigma$, we can apply  the above theorem \ref{twopiecegeneral} to conclude the following corollary:
\begin{corollary}\label{cortwo}
	Let  $x: \Sigma^m \to \mathbb{S}^n(1)$ be an isometric and minimal immersion of a closed, (compact without boundary), $m$-dimensional manifold $\Sigma$ in the sphere  $\mathbb{S}^n(1)$, and let $p$ be any point of $\mathbb{S}^n(1)$.  Then
    \begin{itemize}
        \item Either $x(\Sigma)\subseteq E(p)$, 
        \item or, $x(\Sigma)\cap E(p)\neq \emptyset$
            \end{itemize}
            and  hence, in any case,  $x(\Sigma) \cap \Omega(p,\epsilon) \neq \emptyset$, (i.e. $x^{-1}(\Omega(p,\epsilon))\neq \emptyset$).
\end{corollary}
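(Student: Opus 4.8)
The plan is to obtain the corollary as an immediate specialization of Theorem \ref{twopiecegeneral}, the only thing needing verification being that a closed manifold is parabolic. First I would recall the definition used in the proof of Theorem \ref{twopiecegeneral}: $\Sigma$ is parabolic exactly when it admits no non-constant bounded subharmonic (equivalently, superharmonic) function. On a compact manifold without boundary, any continuous subharmonic function attains a global maximum; the strong maximum principle for $\Delta^\Sigma$ then forces it to be locally constant, hence constant on every connected component. Thus every closed manifold is parabolic, and since a closed manifold is in particular complete, all the hypotheses of Theorem \ref{twopiecegeneral} are met by $x:\Sigma^m\to\mathbb{S}^n(1)$.

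Applying Theorem \ref{twopiecegeneral} for the given point $p\in\mathbb{S}^n(1)$ then yields directly the stated dichotomy: either $x(\Sigma)\subseteq E(p)$, or $x(\Sigma)\cap E(p)\neq\emptyset$. Finally, since $E(p)\subseteq\Omega(p,\epsilon)$ for every $\epsilon\in(0,\pi/2)$ (the equator corresponds to $\cos\left({\rm dist}^{\mathbb{S}^n(1)}(p,\cdot)\right)=0$, which lies in $(-\bar\epsilon,\bar\epsilon)$), in either case $x(\Sigma)\cap\Omega(p,\epsilon)\neq\emptyset$, i.e.\ $x^{-1}(\Omega(p,\epsilon))\neq\emptyset$.

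A self-contained variant, which I would mention as it fits the \emph{divergence-theorem} spirit announced in Remark \ref{remarkone}(1), avoids even naming parabolicity: by Lemma \ref{coslema} one has $\Delta^K\cos(r_p)=-m\cos(r_p)$ on each connected component $K$ of $\Sigma$; integrating over the closed manifold $K$ and using $\int_K\Delta^K f=0$ gives $\int_K\cos(r_p)=0$. Hence on each component either $\cos\circ r_p\equiv 0$, so $x(K)\subseteq E(p)$, or $\cos\circ r_p$ changes sign and, being continuous, vanishes somewhere, so $x(K)\cap E(p)\neq\emptyset$; in both cases $x(K)$ meets $E(p)$, whence $x(\Sigma)\cap E(p)\neq\emptyset$.

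I do not anticipate any genuine obstacle here: the corollary is essentially a repackaging of Theorem \ref{twopiecegeneral} with the elementary implication ``compact $\Rightarrow$ parabolic''. The only point deserving a word of care is that $\Sigma$ need not be connected, so both the parabolicity argument and the maximum/divergence-theorem argument must be run component by component — which is exactly how the proof of Theorem \ref{twopiecegeneral} is already organized.
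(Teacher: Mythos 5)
Your primary argument is exactly the paper's: observe that compactness implies parabolicity and invoke Theorem~\ref{twopiecegeneral}; the paper makes precisely this remark ("compact, and hence, parabolic") without even spelling out the maximum-principle justification you supply. Your self-contained variant via $\int_K\Delta^K\cos(r_p)=0$ on each closed component $K$, forcing $\int_K\cos(r_p)=0$ and hence a sign change or identical vanishing, is a correct and pleasant shortcut that bypasses parabolicity altogether, though the paper does not use it here.
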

%In conclusion, in this first result, we have seen that when $x: \Sigma^m \to \mathbb{S}^n(1)$ is  compact and minimal, then \emph{always} there are points of $x(\Sigma)$ included in any equatorial strip, some in one part of the equator, some in the other part.

We can show an analogous result for Riemannian submersions on the sphere, using the fact, alluded in the introduction, that the projection $\pi$ commutes with the Laplacians $\Delta^\Sigma$ and $\Delta^{\ese^n(1)}$ when the fibers of $\pi: \Sigma \rightarrow \ese^n(1)$ are minimal submanifolds of $\Sigma$.

In this case, the set   $\pi^{-1}(\Omega(p,\epsilon))=\cup_{q \in \Omega(p,\epsilon)} \pi^{-1}(q) $ is the fiber bundle projecting on the equatorial strip $ \Omega(p,\epsilon)$, and we want to know if this set is not empty for every $p \in \ese^n(1)$ and $\epsilon >0$. To show this fact, we will prove that the \lq\lq shadow" of the total manifold $\Sigma$ projected by $\pi$ on the sphere cuts all the equators $E(p)$ in the sphere, namely, that
 $$\pi(\Sigma) \cap E(p) \neq \emptyset\,\,\,\forall p \in \mathbb{S}^n(1)$$ and hence, will intersect all the $\epsilon$-strips
        $$\pi(\Sigma) \cap \Omega(p,\epsilon) \neq \emptyset$$
        \noindent so we eventually have  
        $$\pi^{-1}(\Omega(p,\epsilon))\neq \emptyset\,\,\,\forall p \in \mathbb{S}^n(1)$$
   
These questions are answered in the following

\begin{theorem}\label{twopiecegeneralsub}

Let $\pi:\Sigma^m \to \mathbb{S}^n(1)$ be a complete, minimal and parabolic Riemannian submersion, and let $p$ be any point of $\mathbb{S}^n(1)$.  Then
    \begin{itemize}
        \item Either $\pi(\Sigma)\subseteq E(p)$, 
        \item or, $\pi(\Sigma)\cap E(p)\neq \emptyset$
            \end{itemize}
            and  hence, in any case,  $\pi(\Sigma) \cap \Omega(p,\epsilon) \neq \emptyset$, (i.e. $\pi^{-1}(\Omega(p,\epsilon))\neq \emptyset$).
\end{theorem}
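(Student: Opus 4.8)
The plan is to mimic the proof of Theorem~\ref{twopiecegeneral} almost verbatim, exploiting the structural duality between immersions and submersions that was advertised in the Introduction. The key point is that when $\pi:\Sigma^m\to\ese^n(1)$ is a Riemannian submersion with minimal fibers, the projection intertwines the Laplacians: for every $f\in C^\infty(\ese^n(1))$ one has $\Delta^\Sigma(f\circ\pi)=(\Delta^{\ese^n(1)}f)\circ\pi$. I would first record this as a lemma (or cite it as a standard fact about Riemannian submersions with minimal fibers, e.g.\ from O'Neill's formulas: the horizontal Laplacian differs from $\Delta^\Sigma$ by the term $(\Div H)$ coming from the mean curvature $H$ of the fibers, which vanishes precisely when the fibers are minimal; and the horizontal Laplacian of a basic function equals the pull-back of $\Delta^{\ese^n(1)}$ applied to it).

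With that intertwining in hand, the proof runs parallel to the immersion case. Pass to a connected component $K$ of $\Sigma$; since $\Sigma$ is parabolic, so is $K$. Fix $p\in\ese^n(1)$ and consider the coordinate functions $x_1,\dots,x_{n+1}$ of $\ese^n(1)\subset\erre^{n+1}$, which satisfy $\Delta^{\ese^n(1)}x_i=-n\,x_i$ (the intrinsic Takahashi identity for the sphere itself). Define on $K$ the function $h_p:=\cos(r_p)=\sum_i p_i\,(x_i\circ\pi)$, the pull-back of the height function $\cos(\dist^{\ese^n(1)}(p,\cdot))$. Applying the intertwining lemma termwise gives
\begin{equation*}
\Delta^K h_p = \sum_{i=1}^{n+1} p_i\,\Delta^K(x_i\circ\pi)=\sum_{i=1}^{n+1} p_i\,\bigl((\Delta^{\ese^n(1)}x_i)\circ\pi\bigr)=-n\,h_p,
\end{equation*}
the exact analogue of Lemma~\ref{coslema} with $m$ replaced by $n$. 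From here the dichotomy argument is identical: if $h_p$ is constant on $K$ then $0=\Delta^K h_p=-n\,h_p$, so $h_p\equiv 0$, i.e.\ $\pi(K)\subseteq E(p)$; conversely if $\pi(K)\cap E(p)=\emptyset$ then $h_p$ never vanishes, hence has constant sign by connectedness of $K$, so $h_p$ is a bounded, non-constant, signed (super/sub)harmonic function on the parabolic manifold $K$ — a contradiction. Therefore either $\pi(K)\subseteq E(p)$ or $\pi(K)\cap E(p)\neq\emptyset$, and running over components $K$ and taking into account that $\pi(\Sigma)=\bigcup_K\pi(K)$ yields the claimed alternative for $\Sigma$. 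The inclusion $E(p)\subset\Omega(p,\epsilon)$ then gives $\pi(\Sigma)\cap\Omega(p,\epsilon)\neq\emptyset$, equivalently $\pi^{-1}(\Omega(p,\epsilon))\neq\emptyset$.

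The only genuinely new ingredient, and hence the main obstacle, is establishing (or correctly invoking) the Laplacian-intertwining identity $\Delta^\Sigma(f\circ\pi)=(\Delta^{\ese^n(1)}f)\circ\pi$ for submersions with minimal fibers, together with pinning down exactly what \emph{parabolicity of a Riemannian submersion} should mean here (presumably: each connected component of the total space $\Sigma$ is a parabolic Riemannian manifold). One must be slightly careful that $h_p$ is genuinely basic (constant on fibers), which it is because it is a pull-back, so no averaging over fibers is needed. A small subtlety worth a sentence: in the compact case this theorem specializes to a corollary (the analogue of Corollary~\ref{cortwo}) since compact manifolds are parabolic, and that corollary is what feeds into Theorem~\ref{TeoSub}. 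Everything else is a routine transcription of the immersion argument with $m\rightsquigarrow n$ and ``Takahashi on $\Sigma$'' $\rightsquigarrow$ ``the eigenfunction identity on the sphere plus the intertwining of Laplacians.''
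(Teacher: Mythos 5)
Your proof is correct and follows essentially the same route as the paper: pull back $\cos(d_p)$ by the submersion, use the Laplacian-intertwining identity $\Delta^\Sigma(f\circ\pi)=(\Delta^{\ese^n(1)}f)\circ\pi$ (valid exactly when the fibers are minimal) to get $\Delta^\Sigma\cos(r_p)=-n\cos(r_p)$, and then rerun the parabolicity dichotomy from Theorem~\ref{twopiecegeneral} verbatim. The only cosmetic difference is how you reach that eigenfunction identity — you apply the intertwining directly to the ambient coordinates $x_i$ with $\Delta^{\ese^n(1)}x_i=-n x_i$, whereas the paper applies it to $d_p$ to get $\Delta^\Sigma r_p=(n-1)\cot(r_p)$ and then computes via the radial Laplacian formula (which it also wants later for Theorem~\ref{TeoSub1}); both computations yield the same identity.
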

The proof of Theorem \ref{twopiecegeneralsub} follows the lines of the proof of Theorem \ref{twopiecegeneral},  when we consider an isometric immersion but now using the function $\cos(r_p)$ with $r_p:=d_p \circ \pi$ and $d_p$ the geodesic distance in $\mathbb{S}^n(1)$ to $p\in \mathbb{S}^n$ . We take into account in this context that since the fibers of $\pi$ are minimal submanifolds, then, (see \cite{W}, \cite{GP})
$$\Delta^\Sigma(f\circ \pi)(q)=\bigg(\big(\Delta^{\ese^n(1)}f\big)\circ\pi\bigg)(q)$$
  for any smooth function $f:\mathbb{S}^n\to \mathbb{R}$. Choosing $f$ as the geodesic distance $d_p$ to $p\in \mathbb{S}^n(1)$ we can state, see Theorem 27 of \cite{Petersen} for instance, 
  $$
  \Delta^\Sigma r_p=\Delta^\Sigma (d_p\circ \pi)=\big(\Delta^{\ese^n(1)}d_p\big)\circ \pi=(n-1)\frac{\cos(d_p)}{\sin(d_p)}\circ \pi=(n-1)\frac{\cos(r_p)}{\sin(r_p)}
  $$
  Hence for any smooth function $F:\mathbb {R}\to \mathbb{R}$,
  $$
\Delta^\Sigma F(r_p)={\rm div }(F'(r_p)\nabla^\Sigma r_p)=F''(r_p)\Vert \nabla^\Sigma r_p\Vert^2+ (n-1)\frac{\cos(r_p)}{\sin(r_p)} F'(r_p).
$$
  
By expressing $\nabla^\Sigma r$ in terms an orthonormal basis of the horizontal distribution and then projecting on the base space using the fact that $r_p=d_p\circ \pi$ we conclude that $\Vert \nabla^\Sigma r_p\Vert^2=1$ and hence 
\begin{equation}\label{eq:lapsub}
    \Delta^\Sigma F(r_p)=F''(r_p)+(n-1) \frac{\cos(r_p)}{\sin(r_p)} F'(r_p).
\end{equation}
In particular
$$
\Delta^\Sigma \cos(r_p)=-n \cos(r_p),
$$
what is everything needed for the proof of the above theorem. 

As in the case of immersions, where we obtain Corollary \ref{cortwo} for the compact case as a direct consequence of Theorem \ref{twopiecegeneral}, we have the following corresponding corollary for compact submersions deduced as a direct application of Theorem \ref{twopiecegeneralsub}:
\begin{corollary}\label{cortwoSub}
	Let $\pi:\Sigma^m \to \mathbb{S}^n(1)$ be a complete, minimal and compact Riemannian submersion, and let $p$ be any point of $\mathbb{S}^n(1)$.  Then
    \begin{itemize}
        \item Either $\pi(\Sigma)\subseteq E(p)$, 
        \item or, $\pi(\Sigma)\cap E(p)\neq \emptyset$
            \end{itemize}
            and  hence, in any case,  $\pi(\Sigma) \cap \Omega(p,\epsilon) \neq \emptyset$, (i.e. $\pi^{-1}(\Omega(p,\epsilon))\neq \emptyset$).
\end{corollary}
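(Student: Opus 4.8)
The plan is to obtain Corollary \ref{cortwoSub} as an immediate specialization of Theorem \ref{twopiecegeneralsub}, in exact parallel to the way Corollary \ref{cortwo} was read off from Theorem \ref{twopiecegeneral}. The only thing that needs to be checked is that the hypotheses of Theorem \ref{twopiecegeneralsub} --- completeness, minimality, and parabolicity of the Riemannian submersion --- all hold under the standing assumption that $\Sigma$ is closed. Completeness is automatic, since a compact manifold without boundary is complete; minimality of the fibers is assumed; and parabolicity is the point to verify.

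To see that a closed Riemannian manifold $\Sigma$ is parabolic, I would invoke the strong maximum principle: any subharmonic function on $\Sigma$ attains its supremum at some point of $\Sigma$, and hence must be constant on the connected component containing that point. Therefore every bounded subharmonic function on $\Sigma$ is constant on each component, which is precisely the defining property of a parabolic manifold. Equivalently, one can run the argument in the proof of Theorem \ref{twopiecegeneralsub} directly on each compact connected component $K$ of $\Sigma$: the function $\cos(r_p)$, with $r_p = d_p\circ\pi$, satisfies $\Delta^\Sigma\cos(r_p) = -n\cos(r_p)$ by \eqref{eq:lapsub}; if $\pi(K)\cap E(p)=\emptyset$ it has constant sign on $K$, so $\cos(r_p)$ or $-\cos(r_p)$ is a non-constant subharmonic function on the closed manifold $K$, which is impossible unless $\cos(r_p)$ is constant, in which case $0 = \Delta^\Sigma\cos(r_p) = -n\cos(r_p)$ forces $r_p\equiv\pi/2$ and $\pi(K)\subseteq E(p)$.

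Granting parabolicity, Theorem \ref{twopiecegeneralsub} applies verbatim and yields the dichotomy: either $\pi(\Sigma)\subseteq E(p)$, or $\pi(\Sigma)\cap E(p)\neq\emptyset$. In either case $\pi(\Sigma)\cap\Omega(p,\epsilon)\neq\emptyset$, because $E(p)\subseteq\Omega(p,\epsilon)$ for every $\epsilon\in(0,\pi/2)$, and this is equivalent to $\pi^{-1}(\Omega(p,\epsilon))\neq\emptyset$. I do not expect any genuine obstacle here: the entire mathematical content of the corollary is housed in Theorem \ref{twopiecegeneralsub}, and the single additional ingredient is the elementary observation that closed manifolds are parabolic.
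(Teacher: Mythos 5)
Your proposal is correct and matches the paper's own route: the paper derives Corollary \ref{cortwoSub} as a direct application of Theorem \ref{twopiecegeneralsub}, using exactly the observation that a closed manifold is parabolic. You merely make explicit (via the maximum principle, or by running the argument directly on each compact component) the step that the paper treats as folklore, so there is nothing to add.
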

%%%%%%%%%%%%%%%%%%
\section{Main results}
%%%%%%%%%%%%%%%%%%%%

In the above Section, we have proved in Corollaries \ref{cortwo} and \ref{cortwoSub} that, when the immersion $x: \Sigma^m \to \mathbb{S}^n(1)$ is  compact and minimal, (and respectively, the submersion $\pi:\Sigma^m \to \mathbb{S}^n(1)$ is compact and minimal), then \emph{always} there are points of $x(\Sigma)$, (or $\pi(\Sigma)$), included in any equatorial strip, some in one part of the equator, some in the other part. The question now is to estimate the \lq\lq amount" of such points (in $x(\Sigma)$ or $\pi(\Sigma)$),  which are included in these equatorial strips. 

We shall see that, as occurs in the intrinsic case, as the dimension increases, the \lq\lq greater'' the number of points of the sets $x(\Sigma)$ or $\pi(\Sigma)$ within the equatorial strips. Namely, we observe a \emph{measure concentration phenomenon} satisfied by $x: \Sigma\to \mathbb{S}^n(1)$ and $\pi:\Sigma^m \to \mathbb{S}^n(1)$.

The following first two  theorems, (Theorem \ref{extrinsicconc1} and Theorem \ref{extconc2}), describes a \lq\lq Fat equator effect" in a closed and minimal submanifold in the sphere, the latter being a refinement of the first with respect to the rate of measure concentration. Indeed, while in the first theorem this rate is polynomial, in the second theorem it will be shown that the concentration of measure grows at an exponential rate, as in Theorem \ref{measurecon} and its corollary  Theorem \ref{equator}. Although the second theorem improves clearly the first one, assuming the same hypotheses, we show both in order to make visible the techniques used in both results.

We shall finish this Section stating and proving in Theorem \ref{TeoSub1}  that, when the total space $\Sigma$ in a minimal Riemannian submersion $\pi: \Sigma \rightarrow \ese^n(1)$ is compact, almost all measure in $\Sigma$ concentrates in the the fiber bundle of the fibers projecting on the equatorial strips $\Omega(p, \epsilon)$ of any width $\epsilon$ in the sphere $\ese^n(1)$. As we have mentioned in the Introduction, its proof is based in the same techniques than we have used for the immersions.

\begin{theorem}[Main I]\label{extrinsicconc1}
Let  $x: \Sigma^m \to \mathbb{S}^n(1)$ be an isometric and minimal immersion of the closed $m$-dimensional manifold $\Sigma$ in the sphere  $\mathbb{S}^n(1)$. Then, for any point $p \in \mathbb{S}^n(1)$ and any  $\epsilon \in (0,\pi/2)$ we have 

$$
 1 \geq \frac{{\rm vol}\left(x^{-1}(\Omega(p,\epsilon))\right)}{{\rm vol}(\Sigma)}\geq1-\frac{1}{(m+1)\sin^2\epsilon}.
$$ Hence, for each $p$ and $\epsilon$,
$$
\lim_{m\to\infty}\frac{{\rm vol}\left(x^{-1}(\Omega(p,\epsilon))\right)}{{\rm vol}(\Sigma)}=1.
$$
\end{theorem}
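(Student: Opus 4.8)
The plan is to exploit the key identity from Lemma~\ref{coslema}, namely $\Delta^K\cos(r_p)=-m\cos(r_p)$, which shows that on each connected component $K$ of $\Sigma$ the function $u:=\cos\circ r_p$ is (minus $m$ times) its own Laplacian. Since $\Sigma$ is closed, integrating $\Delta^\Sigma u=-m\,u$ over $\Sigma$ gives $\int_\Sigma u\,d\mu=0$ by the divergence theorem; so $u$ has mean zero on $\Sigma$. The real leverage, though, comes from multiplying the equation by $u$ itself and integrating: $\int_\Sigma u\,\Delta^\Sigma u\,d\mu=-m\int_\Sigma u^2\,d\mu$, and integrating by parts the left side equals $-\int_\Sigma\|\nabla^\Sigma u\|^2\,d\mu$. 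Hence
\begin{equation*}
\int_\Sigma\|\nabla^\Sigma u\|^2\,d\mu=m\int_\Sigma u^2\,d\mu.
\end{equation*}
This is a Rayleigh-quotient bound: it says $u$ is an eigenfunction-like object realizing the eigenvalue $m$ for $\Delta^\Sigma$.

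Next I would bring in the second eigen-quantity. Observe that $u$ satisfies $|\nabla^\Sigma u|\le |\nabla^{\ese^n(1)}\cos(r_p)|\circ x$ because $x$ is an isometric immersion (the tangential part of a vector has norm at most the vector). On $\ese^n(1)$ one computes $\|\nabla^{\ese^n(1)}\cos r_p\|^2=\sin^2 r_p=1-\cos^2 r_p=1-u^2$, so pointwise on $\Sigma$ we get $\|\nabla^\Sigma u\|^2\le 1-u^2$. Substituting into the integral identity yields
\begin{equation*}
m\int_\Sigma u^2\,d\mu=\int_\Sigma\|\nabla^\Sigma u\|^2\,d\mu\le\int_\Sigma(1-u^2)\,d\mu={\rm vol}(\Sigma)-\int_\Sigma u^2\,d\mu,
\end{equation*}
whence $\int_\Sigma u^2\,d\mu\le\frac{1}{m+1}{\rm vol}(\Sigma)$. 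So the $L^2$-norm of $\cos r_p$ over $\Sigma$ is small when $m$ is large.

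Finally I convert this $L^2$ bound into the measure estimate by Chebyshev's inequality. On the complement of the strip, $x^{-1}(\Omega(p,\epsilon))^c=\{q\in\Sigma:|\cos r_p(q)|\ge\bar\epsilon\}$ with $\bar\epsilon=\sin\epsilon$, we have $u^2\ge\bar\epsilon^2=\sin^2\epsilon$, so
\begin{equation*}
\sin^2\epsilon\cdot{\rm vol}\big(\Sigma\setminus x^{-1}(\Omega(p,\epsilon))\big)\le\int_{\Sigma\setminus x^{-1}(\Omega(p,\epsilon))}u^2\,d\mu\le\int_\Sigma u^2\,d\mu\le\frac{{\rm vol}(\Sigma)}{m+1}.
\end{equation*}
Dividing by ${\rm vol}(\Sigma)\sin^2\epsilon$ gives $\frac{{\rm vol}(\Sigma\setminus x^{-1}(\Omega(p,\epsilon)))}{{\rm vol}(\Sigma)}\le\frac{1}{(m+1)\sin^2\epsilon}$, which rearranges to the stated inequality; the limit as $m\to\infty$ is immediate. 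The upper bound $1$ is trivial. I expect the only genuinely delicate point to be the gradient comparison $\|\nabla^\Sigma u\|^2\le 1-u^2$: one must be careful that the right object is the \emph{extrinsic} distance $r_p$ (distance in $\ese^n(1)$ composed with $x$), that the identity $\|\nabla^{\ese^n(1)}\cos r_p\|^2=\sin^2 r_p$ holds away from the cut locus of $p$ and extends by continuity, and that the inequality is between the intrinsic gradient on $\Sigma$ and the ambient gradient restricted to $x(\Sigma)$—everything else is routine integration by parts on a closed manifold.
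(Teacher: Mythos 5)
Your proposal is correct and follows essentially the same route as the paper's proof: both start from Lemma~\ref{coslema} ($\Delta^\Sigma\cos r_p=-m\cos r_p$), both integrate by parts (the paper via the divergence of $\cos r_p\,\nabla^\Sigma\cos r_p$, you via multiplying by $u$ and integrating by parts — the same computation) to get $m\int_\Sigma\cos^2 r_p=\int_\Sigma\|\nabla^\Sigma\cos r_p\|^2$, both bound the gradient term by $\sin^2 r_p=1-\cos^2 r_p$ to obtain $(m+1)\int_\Sigma\cos^2 r_p\le{\rm vol}(\Sigma)$, and both finish by Chebyshev on the complement of the strip. The one cosmetic difference is in the gradient bound: the paper writes $\|\nabla^\Sigma\cos r_p\|^2=\sin^2 r_p\,\|\nabla^\Sigma r_p\|^2$ and uses $\|\nabla^\Sigma r_p\|\le1$, while you compare the intrinsic gradient directly to the ambient gradient $\|\nabla^{\ese^n(1)}\cos\widetilde r_p\|=\sin\widetilde r_p$; these are the same estimate, and your worry about the cut locus is moot because $\cos r_p(q)=\langle p,x(q)\rangle$ is globally smooth on $\Sigma$, as the paper notes.
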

\begin{proof}\

Given the extrisic distance function $r_p: \Sigma \rightarrow \erre$, we know that

\begin{equation}\label{eqOne}
\begin{aligned}
\Div^\Sigma\big(\cos (r_p(q))\,\nabla^\Sigma \cos(r_p(q))\big)=&\cos(r_p(q))\big(\Delta^\Sigma\cos(r_p(q))\big)\\&+\Vert \nabla^\Sigma \cos(r_p(q))\Vert^2.
\end{aligned}
\end{equation}
Then, since $\Delta^\Sigma \cos(r_p(q))=-m\cos(r_p(q))$, and integrating equation \eqref{eqOne} along $\Sigma$, we have, using that $\Sigma$ is compact, that $\cos\circ r_p$ is smooth in $\Sigma$ and the divergence theorem:

\begin{equation}\label{eq:longa}
    \begin{aligned}
m\int_\Sigma\cos^2(r_p)=&\int_\Sigma\Vert \nabla^{\Sigma}  \cos(r_p)\Vert^2-\int_\Sigma \Div^\Sigma\big(\cos(r_p)\nabla^\Sigma \cos(r_p)\big)\\
=&\int_\Sigma \sin^2(r_p)\Vert \nabla^{\Sigma} r_p\Vert^2\\\leq& \int_\Sigma \sin^2(r_p)
={\rm vol}(\Sigma)- \int_\Sigma \cos^2(r_p).
\end{aligned}
\end{equation}
Then
\begin{equation}\label{eq:nula}
(m+1)\int_\Sigma \cos^2(r_p)\leq {\rm vol}(\Sigma).    
\end{equation}
Which implies, as  $\Sigma-x^{-1}(\Omega(p,\epsilon)) \subseteq \Sigma$,
\begin{equation}\label{eq:unua}
(m+1)\int_{\Sigma-x^{-1}(\Omega(p,\epsilon))} \cos^2(r_p)\leq {\rm vol}(\Sigma)    
\end{equation}
The strip around the equator $E(p)$ is ($\bar \epsilon=\sin \epsilon$):
$$
\Omega(p,\epsilon) =\left\lbrace x\in \mathbb{S}^{n}(1)\, : \, -\bar \epsilon<\cos\left({\rm dist}^{\mathbb{S}^{n}(1)}(p,x)\right)<\bar \epsilon\right\rbrace.
$$
Since $r_p(q):={\rm dist}^{\mathbb{S}^{n}(1)}(p,x(q))$, the set $x^{-1}(\Omega(p,\epsilon))$ can be written as 
$$
x^{-1}(\Omega(p,\epsilon))=\left\lbrace q \in \Sigma\,:\, -\bar \epsilon<\cos\left(r_p(q))\right)<\bar \epsilon\right\rbrace.
$$ Then for all $q \in\Sigma-x^{-1}(\Omega(p,\epsilon))$, we have
$ \bar\epsilon^2 \leq \cos^2(r_p(q))$  and by using inequality \eqref{eq:unua} we conclude that
 $$
(m+1)\bar \epsilon^2\left({\rm vol}(\Sigma)-{\rm vol}(x^{-1}(\Omega(p,\epsilon)))\right)\leq {\rm vol}(\Sigma),
$$ which finally implies the statement of the Theorem:
$$
\frac{{\rm vol}(x^{-1}(\Omega(p,\epsilon)))}{{\rm vol}(\Sigma)}\geq 1-\frac{1}{(m+1)\bar\epsilon^2}.
$$\end{proof}

\begin{theorem}[Main II]\label{extconc2}
Let  $x: \Sigma^m \to \mathbb{S}^n(1)$ be an isometric and minimal immersion of the closed $m$-dimensional manifold $\Sigma$ in the sphere  $\mathbb{S}^n(1)$. Then, for any point $p \in \mathbb{S}^n(1)$ , any  $\epsilon \in (0,\pi/2)$ and any $\delta\in [0,\frac{1}{2})$ we have 

\begin{equation}\label{eq4Bis}
1\geq \frac{{\rm vol}\left(x^{-1}(\Omega(p,\epsilon))\right)}{{\rm vol}(\Sigma)}\geq  1- \sqrt{\frac{1}{1-2\delta}}e^{-\delta\bar\epsilon^2(m+1)}
\end{equation}
where $\bar\epsilon=\sin\epsilon$.  Hence, for each $p \in \ese^n(1)$ and $\epsilon>0$,
$$
\lim_{m\to\infty}\frac{{\rm vol}\left(x^{-1}(\Omega(p,\epsilon))\right)}{{\rm vol}(\Sigma)}=1
$$
\end{theorem}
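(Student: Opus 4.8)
The plan is to upgrade the second-moment estimate $(m+1)\int_\Sigma \cos^2(r_p)\le{\rm vol}(\Sigma)$ used in Theorem~\ref{extrinsicconc1} into a uniform control of \emph{all} even moments of $u:=\cos\circ r_p$, and then to repackage these moments into an exponential moment bound on which Markov's inequality can be run. The two structural facts that drive everything are already available: by Lemma~\ref{coslema} one has $\Delta^\Sigma u=-m\,u$, and since $r_p$ is the extrinsic distance function, $\|\nabla^\Sigma r_p\|\le 1$, so that $\|\nabla^\Sigma u\|^2=\sin^2(r_p)\,\|\nabla^\Sigma r_p\|^2\le 1-u^2$.

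First I would derive the moment recursion. For each integer $k\ge 1$ expand
\[
\Div^\Sigma\!\bigl(u^{2k-1}\nabla^\Sigma u\bigr)=(2k-1)\,u^{2k-2}\,\|\nabla^\Sigma u\|^2+u^{2k-1}\,\Delta^\Sigma u ,
\]
integrate over the compact manifold $\Sigma$ so that the divergence theorem annihilates the left-hand side, and insert $\Delta^\Sigma u=-m\,u$ and $\|\nabla^\Sigma u\|^2\le 1-u^2$. Writing $a_k:=\int_\Sigma u^{2k}$ this yields $m\,a_k=(2k-1)\int_\Sigma u^{2k-2}\|\nabla^\Sigma u\|^2\le (2k-1)(a_{k-1}-a_k)$, i.e. the recursion
\[
a_k\le\frac{2k-1}{m+2k-1}\,a_{k-1},\qquad a_0={\rm vol}(\Sigma),
\]
and hence $a_k\le{\rm vol}(\Sigma)\prod_{j=1}^{k}\frac{2j-1}{m+2j-1}\le{\rm vol}(\Sigma)\,\dfrac{(2k-1)!!}{(m+1)^k}$, the last inequality merely because $m+2j-1\ge m+1$ for $j\ge1$.

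Next, for $\delta\in[0,\tfrac12)$ I would consider the exponential moment $\int_\Sigma e^{\delta(m+1)u^2}$, expand the exponential and integrate term by term (legitimate by monotone convergence, since every summand is nonnegative, $u^2\ge 0$), and substitute the moment bound:
\[
\int_\Sigma e^{\delta(m+1)u^2}\le{\rm vol}(\Sigma)\sum_{k\ge 0}\frac{\delta^k}{k!}\,(2k-1)!!={\rm vol}(\Sigma)\sum_{k\ge 0}\binom{2k}{k}\Bigl(\frac{\delta}{2}\Bigr)^{k}=\frac{{\rm vol}(\Sigma)}{\sqrt{1-2\delta}},
\]
where I used $(2k-1)!!/k!=\binom{2k}{k}/2^{k}$ together with the generating function $\sum_k\binom{2k}{k}x^k=(1-4x)^{-1/2}$, whose radius of convergence is exactly what forces the constraint $\delta<\tfrac12$. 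Finally, on $\Sigma\setminus x^{-1}(\Omega(p,\epsilon))$ the description of the strip gives $u^2=\cos^2(r_p)\ge\bar\epsilon^2$, so $e^{\delta(m+1)\bar\epsilon^2}\,{\rm vol}\bigl(\Sigma\setminus x^{-1}(\Omega(p,\epsilon))\bigr)\le\int_\Sigma e^{\delta(m+1)u^2}$; combining this with the previous display and dividing by ${\rm vol}(\Sigma)$ produces exactly \eqref{eq4Bis}, and the limit then follows by fixing any $\delta\in(0,\tfrac12)$, say $\delta=\tfrac14$, and letting $m\to\infty$.

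The integration by parts, the telescoping of the recursion and the series bookkeeping are all routine; the one genuinely load-bearing choice — and the step I expect to require the most care — is picking the exponential moment with the dimensional factor built in, namely $e^{\delta(m+1)\cos^2 r_p}$ rather than, say, $e^{\delta\cos^2 r_p}$ or an even power of $\cos r_p$ alone. It is precisely this normalization that makes the iterated moment bound collapse to the $\chi^2$-type Laplace transform $(1-2\delta)^{-1/2}$, and matching up the constant $m+1$ (not $m$) together with the admissible range $\delta\in[0,\tfrac12)$ is what makes the stated estimate come out in the sharp form claimed.
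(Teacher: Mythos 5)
Your proof is correct and reaches exactly the stated bound, but it differs from the paper's argument in a structurally interesting way. The paper proves the same moment recursion (your $a_k\le \frac{2k-1}{m+2k-1}a_{k-1}$, which is its inequality \eqref{eqdoss2Bis}), but then packages it as a comparison lemma (Lemma~\ref{keylemaimm}): all even moments of $\cos r_p$ over $\Sigma$ are dominated by those of a \emph{totally geodesic} $\widetilde{x}:\mathbb{S}^m(1)\hookrightarrow\mathbb{S}^n(1)$, for which the recursion is an equality. To evaluate the resulting exponential moment the paper then proves a separate Gronwall-type estimate (Lemma~\ref{lastalemma}): setting $F(s)=\frac{1}{{\rm vol}(\mathbb{S}^m(1))}\int_{\mathbb{S}^m(1)}e^{s\cos^2\widetilde{r}_p}$, an integration by parts gives $mF'\le F-F'+2sF'$, hence $F'/F\le (m+1-2s)^{-1}$, which integrates to $F(t)\le\sqrt{(m+1)/(m+1-2t)}$; substituting $t=\delta(m+1)$ recovers $\sqrt{1/(1-2\delta)}$. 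You bypass the comparison sphere entirely: you relax the exact product $\prod_{j\le k}\frac{2j-1}{m+2j-1}$ to $(2k-1)!!/(m+1)^k$ and then recognize $\sum_k\binom{2k}{k}(\delta/2)^k=(1-2\delta)^{-1/2}$ by the central binomial generating function. Both routes hinge on the same recursion and yield the same constant; the paper's version isolates the geometric statement that the totally geodesic sphere is extremal for every even moment and evaluates the exponential moment via an ODE, while yours is more self-contained and makes the constraint $\delta<\tfrac12$ visibly forced by the radius of convergence of $(1-4x)^{-1/2}$. Two small remarks: it is cleaner to note directly that $u=\cos\circ r_p=\langle p,x(\cdot)\rangle$ is globally smooth on $\Sigma$ (so the formal manipulation $\|\nabla^\Sigma u\|^2=\sin^2 r_p\,\|\nabla^\Sigma r_p\|^2$ is harmless even where $r_p$ itself is not differentiable), and you should adopt the convention $(-1)!!=1$ so that the $k=0$ term of your moment bound reads correctly.
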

\begin{proof}\

We need the following

\begin{lemma}\label{keylemaimm}
Let  $x: \Sigma^m \to \mathbb{S}^n(1)$ be an isometric and minimal immersion of the closed $m$-dimensional manifold $\Sigma$ in the sphere  $\mathbb{S}^n(1)$ and let $p\in \mathbb{S}^n(1)$. Then

\begin{equation}\label{lemmaformula1}
\int_\Sigma \cos^{2k}(r_p) \leq \frac{\int_{\mathbb{S}^m(1)} \cos^{2k}(\widetilde{r}_p)}{{\rm vol}(\mathbb{S}^m(1))} {\rm vol}(\Sigma)
\end{equation}
where $\widetilde{x}: \mathbb{S}^m(1) \rightarrow \mathbb{S}^n(1)$ is a totally geodesic immersion of $\ese^m(1)$ in $\ese^n(1)$, $r_p$ is the extrinsic distance function to $p$ given by the immersion $x$, and $\widetilde{r}_p$ is the extrinsic distance to $p$ on $\ese^n(1)$ given by the totally geodesic immersion $\widetilde{x}$. As a consequence, we have the following inequality,  for a given $t>0$:

\begin{equation}\label{lemmaformula2}
%\begin{aligned}
\int_{\Sigma} e^{t \cos^2r_p}  \leq\frac{\int_{\ese^m(1)} e^{t \cos^2\widetilde{r}_p}}{{\rm vol}(\mathbb{S}^m(1))} {\rm vol}(\Sigma)
%\end{aligned}
\end{equation}
\end{lemma}
\begin{proof}

Let us apply an inductive argument to prove inequality (\ref{lemmaformula1}): we are going to see first that this inequality is true for $k=1$.  From \eqref{eq:longa} using that $\Vert \nabla^\Sigma r_p\Vert\leq 1$ we conclude inequality \eqref{eq:nula}:
$$
\int_\Sigma \cos^2(r_p) \leq \frac{1}{m+1} {\rm vol}(\Sigma)
$$
Now, if we consider the totally geodesic submanifold $\widetilde{x}: \mathbb{S}^m(1) \rightarrow \mathbb{S}^n(1)$, we conclude the following equality, (in an analogous way  to \eqref{eq:longa}, but using now that $\Vert\nabla^{\mathbb{S}^m(1)} \widetilde{r}_p\Vert=1$, because $\widetilde{x}$ is a totally geodesic immersion), 
\begin{equation}\label{eqcinc}
\int_{\mathbb{S}^m(1)} \cos^2(\widetilde{r}_p) = \frac{1}{m+1} {\rm vol}(\mathbb{S}^m(1)).
\end{equation}

Hence,  we have 
\begin{equation}\label{eqsis}
\int_\Sigma \cos^2(r_p) \leq \frac{\int_{\mathbb{S}^m(1)} \cos^2(\widetilde{r}_p)}{{\rm vol}(\mathbb{S}^m(1))} {\rm vol}(\Sigma)
\end{equation}
and we have proved that the inequality (\ref{lemmaformula1}) is true for $k=1$.

Assuming that it is true for $k >1$, namely, that
\begin{equation}\label{eqsisBis}
\int_\Sigma \cos^{2k}(r_p) \leq \frac{\int_{\mathbb{S}^m(1)} \cos^{2k}(\widetilde{r}_p)}{{\rm vol}(\mathbb{S}^m(1))} {\rm vol}(\Sigma),
\end{equation}
we see that it holds for $k+1$ in the following way: by using lemma \ref{coslema}  we have  for any $q\in \Sigma$ that

\begin{equation}
\begin{aligned}
&\Div^\Sigma\left(\cos^{2k+1} (r_p(q))\,\nabla^\Sigma \cos(r_p(q)\right)=\cos^{2k+1}(r_p(q))\Delta^\Sigma\cos(r_p(q)\\&+(2k+1) \cos^{2k}(r_p(q))\Vert \nabla^\Sigma \cos(r_p(q))\Vert^2\\
&=-m\cos^{2k+2}(r_p(q))\\&+(2k+1) \cos^{2k}(r_p(q))\Vert \nabla^\Sigma \cos(r_p(q))\Vert^2.
\end{aligned}
\end{equation}
Then, integrating the above equation  along $\Sigma$, we have, using that $\Sigma$ is compact and the divergence theorem:

\begin{equation}\label{eqdossBis}
\begin{aligned}
m\int_\Sigma \cos^{2(k+1)}(r_p)=&(2k+1)\int_\Sigma\cos^{2k}(r_p)\Vert \nabla^{\Sigma}  \cos(r_p)\Vert^2\\&-\int_\Sigma \Div^\Sigma\left(\cos^{2k+1}(r_p))\nabla^\Sigma \cos(r_p)\right)
\\
=&(2k+1)\int_\Sigma \cos^{2k}(r_p)\sin^2(r_p)\Vert \nabla^{\Sigma} r_p\Vert^2\\
=&(2k+1)\int_\Sigma \cos^{2k}(r_p)\left(1-\cos^2(r_p)\right)\Vert \nabla^{\Sigma} r_p\Vert^2\\
\leq& (2k+1)\int_\Sigma \cos^{2k}(r_p)-(2k+1)\int_\Sigma \cos^{2k+2}(r_p).
\end{aligned}
\end{equation}
Thence
\begin{equation}\label{eqdoss2Bis}
\begin{aligned}
\int_\Sigma \cos^{2(k+1)}(r_p)\leq &\frac{2k+1}{m+2k+1}\int_\Sigma \cos^{2k}(r_p)\\ 
\leq&  \frac{2k+1}{m+2k+1}\frac{\int_{\mathbb{S}^m(1)} \cos^{2k}(\widetilde{r}_p)}{{\rm vol}(\mathbb{S}^m(1))} {\rm vol}(\Sigma)
\end{aligned}
\end{equation}

On the other hand, same computations than in (\ref{eqdossBis}), but when we consider the totally geodesic immersion of $\ese^m(1)$ in $\ese^n(1)$ gives the formula

$$
\begin{aligned}
m\int_{\ese^m(1)} \cos^{2(k+1)}(\widetilde{r}_p)=&(2k+1)\int_{\ese^m(1)}\cos^{2k}(\widetilde{r}_p)-(2k+1)\int_{\ese^m(1)} \cos^{2k+2}(\widetilde{r}_p)
\end{aligned}
$$
and hence
\begin{equation}\label{eqdoss4Bis}
%\begin{aligned}
\int_{\ese^m(1)} \cos^{2(k+1)}(\widetilde{r}_p)=\frac{2k+1}{m+2k+1}\int_{\ese^m(1)}\cos^{2k}(\widetilde{r}_p)
%\end{aligned}
\end{equation}
Finally, using (\ref{eqdoss2Bis}) and (\ref{eqdoss4Bis}) we obtain

\begin{equation}\label{eqdoss5Bis}
\begin{aligned}
\int_\Sigma \cos^{2(k+1)}(r_p)\leq& \frac{2k+1}{m+2k+1}\int_\Sigma \cos^{2k}(r_p)\\  \leq & \frac{2k+1}{m+2k+1}\frac{\int_{\mathbb{S}^m(1)} \cos^{2k}(\widetilde{r}_p)}{{\rm vol}(\mathbb{S}^m(1))} {\rm vol}(\Sigma)\\
=&\frac{\int_{\mathbb{S}^m(1)} \cos^{2(k+1)}(\widetilde{r}_p)}{{\rm vol}(\mathbb{S}^m(1))} {\rm vol}(\Sigma)
\end{aligned}
\end{equation}

Now, we are going to prove inequality (\ref{lemmaformula2}): given a fixed $t>0$, we have, applying inequality (\ref{lemmaformula1}) that

$$\int_\Sigma\frac{t^k \cos^{2k}(r_p)}{k!} \leq \frac{\int_{\mathbb{S}^m(1)}\frac{t^k \cos^{2k}(\widetilde{r}_p)}{k!}}{{\rm vol}(\mathbb{S}^m(1))} {\rm vol}(\Sigma)$$
 so, applying dominated convergence theorem and the power series expansion of the exponential function, we have, for a given $t>0$:
$$
\begin{aligned}
\int_{\Sigma} e^{t \cos^2r_p} =&\sum_{k=0}^\infty \int_\Sigma\frac{t^k \cos^{2k}(r_p)}{k!} \\
\leq& 
\sum_{k=0}^\infty \frac{\int_{\mathbb{S}^m(1)}\frac{t^k \cos^{2k}(\widetilde{r}_p)}{k!}}{{\rm vol}(\mathbb{S}^m(1))} {\rm vol}(\Sigma)=\frac{\int_{\ese^m(1)} e^{t \cos^2\widetilde{r}_p}}{{\rm vol}(\mathbb{S}^m(1))} {\rm vol}(\Sigma)
\end{aligned}
$$
\end{proof}
Now, as we have seen in the proof of Theorem \ref{extrinsicconc1}
we have that for all  $q \in\Sigma-x^{-1}(\Omega(p,\epsilon))$, 
$$ \bar\epsilon^2 \leq \cos^2(r_p(q))$$
and hence, for every $t >0$, and for all $q \in \Sigma- x^{-1}(\Omega(p,\epsilon))$,
$$e^{t\bar\epsilon^2} \leq e^{t\cos^2(r_p(q))} $$

Therefore, using inequality (\ref{eqsis}), we conclude
 \begin{equation}\label{eqsisBis}
 \begin{aligned}
e^{t\bar\epsilon^2}&\int_{\Sigma-x^{-1}(\Omega(p,\epsilon))} 1\leq 
\int_{\Sigma-x^{-1}(\Omega(p,\epsilon))}e^{t \cos^2r_p}\\&\leq \int_\Sigma e^{t\cos^2r_p} \leq \frac{\int_{\mathbb{S}^m(1)}e^{t \cos^2\widetilde{r}_p}}{{\rm vol}(\mathbb{S}^m(1))} {\rm vol}(\Sigma)
\end{aligned}
\end{equation}
Then we can state that
\begin{equation}\label{preskauxlasta}
    1-\frac{{\rm vol}(x^{-1}(\Omega(p,\epsilon))}{{\rm vol}(\Sigma)}\leq e^{-t\bar\epsilon^2}\frac{\int_{\mathbb{S}^m(1)}e^{t \cos^2\widetilde{r}_p}}{{\rm vol}(\mathbb{S}^m(1))}.
\end{equation}
Now we need the following 
\begin{lemma}\label{lastalemma} For any $0<t<\frac{m+1}{2}$,
$$
  \frac{\int_{\mathbb{S}^m(1)}e^{t \cos^2\widetilde{r}_p}}{{\rm vol}(\mathbb{S}^m(1))}\leq \sqrt{\frac{m+1}{m+1-2t}}.
    $$
\end{lemma}
\begin{proof}
    Let us define the function 
    $$
s\mapsto F(s):= \frac{\int_{\mathbb{S}^m(1)}e^{s \cos^2\widetilde{r}_p}}{{\rm vol}(\mathbb{S}^m(1))}.
    $$
  By the dominated convergence theorem we know that $F(0)=1$ and
     
    $$
F'(s)=\frac{\int_{\mathbb{S}^m(1)}\cos^2\widetilde{r}_pe^{s \cos^2\widetilde{r}_p}}{{\rm vol}(\mathbb{S}^m(1))}
    $$
 By using lemma \ref{coslema}, the divergence theorem, and that since the immersion is totally geodesic $\Vert \nabla\widetilde{r}_p\Vert^2=1$ we have 
 $$
\begin{aligned}
mF'(s)=&\frac{-\int_{\mathbb{S}^m(1)}\cos\widetilde{r}_pe^{s \cos^2\widetilde{r}_p}\Delta \cos\widetilde{r}_p}{{\rm vol}(\mathbb{S}^m(1))}
=\frac{-\int_{\mathbb{S}^m(1)}{\rm div}\left(\cos\widetilde{r}_pe^{s \cos^2\widetilde{r}_p}\nabla \cos\widetilde{r}_p\right)}{{\rm vol}(\mathbb{S}^m(1))}\\&+\frac{\int_{\mathbb{S}^m(1)}\left\langle\nabla \left(\cos\widetilde{r}_pe^{s \cos^2\widetilde{r}_p}\right),\nabla \cos\widetilde{r}_p\right\rangle}{{\rm vol}(\mathbb{S}^m(1))}\\
=&\frac{\int_{\mathbb{S}^m(1)}\sin^2\widetilde{r}_pe^{s \cos^2\widetilde{r}_p}}{{\rm vol}(\mathbb{S}^m(1))}+2s\frac{\int_{\mathbb{S}^m(1)}\sin^2\widetilde{r}_p{ \cos^2\widetilde{r}_p}e^{s \cos^2\widetilde{r}_p}}{{\rm vol}(\mathbb{S}^m(1))}\\
=&\frac{\int_{\mathbb{S}^m(1)}e^{s \cos^2\widetilde{r}_p}}{{\rm vol}(\mathbb{S}^m(1))}-\frac{\int_{\mathbb{S}^m(1)}\cos^2\widetilde{r}_pe^{s \cos^2\widetilde{r}_p}}{{\rm vol}(\mathbb{S}^m(1))}+2s\frac{\int_{\mathbb{S}^m(1)}\sin^2\widetilde{r}_p{ \cos^2\widetilde{r}_p}e^{s \cos^2\widetilde{r}_p}}{{\rm vol}(\mathbb{S}^m(1))}\\
\leq & F(s)-F'(s)+2sF'(s).
    \end{aligned}
 $$
 Thence, for $s<\frac{m+1}{2}$,
 $$
\frac{F'(s)}{F(s)}\leq \frac{1}{m+1-2s}
 $$
 Integrating the above inequality between $0$ and $t$ we conclude that
 $$
\ln F(t)\leq \ln \sqrt{\frac{m+1}{m+1-2t}}.
 $$
 and the lemma is proved.
\end{proof}
Taking now $t=\delta(m+1)$ with $0\leq \delta<\frac{1}{2}$, the above lemma and inequality \eqref{preskauxlasta} we conclude that
\begin{equation}\label{lasta}
     1-\frac{{\rm vol}(x^{-1}(\Omega(p,\epsilon))}{{\rm vol}(\Sigma)}\leq e^{-\delta\bar\epsilon^2(m+1)}\sqrt{\frac{1}{1-2\delta}}.
\end{equation}
and the theorem is proved.\end{proof}

Finally, we prove that, in a Riemannian submersion of the sphere, the measure concentrates in the fiber bundles which projects inside its equatorial strips:

\begin{theorem}\label{TeoSub1}
Let  $\pi: \Sigma^m \to \mathbb{S}^n(1)$ be a Riemannian submersion with minimal fibers  of the closed $m$-dimensional manifold $\Sigma$ on to the sphere  $\mathbb{S}^n(1)$. Then, for any point $p \in \mathbb{S}^n(1)$ , any  $\epsilon \in (0,\pi/2)$ and any $\delta\in [0,\frac{1}{2})$ we have 

\begin{equation}
1\geq \frac{{\rm vol}\left(\pi^{-1}(\Omega(p,\epsilon))\right)}{{\rm vol}(\Sigma)}=\frac{{\rm vol}\left(\Omega(p,\epsilon)\right)}{{\rm vol}\left(\mathbb{S}^n(1)\right)}\geq  1-\sqrt{\frac{\pi}{2}}e^{-\epsilon^2\frac{n-1}{2}}.
\end{equation}
Hence, for each  $\epsilon$,
\begin{equation}
\lim_{n\to\infty}\frac{{\rm vol}\left(\pi^{-1}(\Omega(p,\epsilon))\right)}{{\rm vol}(\Sigma)}=1.
\end{equation}
\end{theorem}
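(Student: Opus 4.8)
The plan is to decouple the statement into its two assertions: the middle \emph{equality} $\tfrac{{\rm vol}(\pi^{-1}(\Omega(p,\epsilon)))}{{\rm vol}(\Sigma)}=\tfrac{{\rm vol}(\Omega(p,\epsilon))}{{\rm vol}(\mathbb{S}^n(1))}$, which is the only place the minimality of the fibers is used, and the right-hand \emph{inequality}, which is nothing but the intrinsic estimate of Theorem \ref{equator}. For the latter I would observe that $\Omega(p,\epsilon)$ is, by definition, the $\epsilon$-strip around the equator $E(p)$, and that an isometry of $\mathbb{S}^n(1)$ taking $p$ to the north pole carries $\Omega(p,\epsilon)$ onto the strip $\Omega_\epsilon$ of Theorem \ref{equator}; since isometries preserve volume, ${\rm vol}(\Omega(p,\epsilon))/{\rm vol}(\mathbb{S}^n(1))={\rm vol}(\Omega_\epsilon)/{\rm vol}(\mathbb{S}^n(1))\geq 1-\sqrt{\pi/2}\,e^{-\epsilon^2(n-1)/2}$ by \eqref{eq2.5}. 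Combined with the equality this yields the displayed bound, and letting $n\to\infty$ with $p,\epsilon$ fixed gives the limit. (Note that, consistently with Remark \ref{remarkone}, the exponent here involves the dimension $n$ of the base, not $m$, and the parameter $\delta$ plays no role in the bound.)

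For the equality I would exploit, exactly as the Introduction advertises, that $\pi$ intertwines the Laplacians, $\Delta^\Sigma(f\circ\pi)=(\Delta^{\mathbb{S}^n(1)}f)\circ\pi$ for every smooth $f$ on $\mathbb{S}^n(1)$, which holds because the fibers are minimal. First I would show that the linear functional $h\mapsto\int_\Sigma h\circ\pi$ on $C^\infty(\mathbb{S}^n(1))$ is a constant multiple of $h\mapsto\int_{\mathbb{S}^n(1)}h$: if $\int_{\mathbb{S}^n(1)}h=0$ then, $\mathbb{S}^n(1)$ being closed and connected, Hodge theory provides a smooth $f$ with $h=\Delta^{\mathbb{S}^n(1)}f$, whence $h\circ\pi=\Delta^\Sigma(f\circ\pi)$ has zero integral over the closed manifold $\Sigma$; evaluating on $h\equiv 1$ fixes the constant as ${\rm vol}(\Sigma)/{\rm vol}(\mathbb{S}^n(1))$, so
$$\int_\Sigma (h\circ\pi)=\frac{{\rm vol}(\Sigma)}{{\rm vol}(\mathbb{S}^n(1))}\int_{\mathbb{S}^n(1)}h\qquad\text{for all }h\in C^\infty(\mathbb{S}^n(1)).$$
Since $\Omega(p,\epsilon)$ is open with $\pi^{-1}(\Omega(p,\epsilon))=\{q\in\Sigma:\ \cos r_p(q)\in(-\bar\epsilon,\bar\epsilon)\}$ (here $r_p=d_p\circ\pi$), I would then sandwich the indicator of $\Omega(p,\epsilon)$ between two smooth functions that agree with it outside an arbitrarily small neighbourhood of the boundary $\partial\Omega(p,\epsilon)=\{d_p=\tfrac\pi2\pm\epsilon\}$ — a disjoint union of two geodesic $(n-1)$-spheres, hence of measure zero in $\mathbb{S}^n(1)$, and with $\pi^{-1}(\partial\Omega(p,\epsilon))$ of measure zero in $\Sigma$ by the coarea formula for the Riemannian submersion $\pi$ — and pass to the limit by dominated convergence, obtaining ${\rm vol}(\pi^{-1}(\Omega(p,\epsilon)))={\rm vol}(\Sigma)\,{\rm vol}(\Omega(p,\epsilon))/{\rm vol}(\mathbb{S}^n(1))$.

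Two remarks on alternatives and on the difficulty. The equality can equally well be obtained by the divergence technique of Theorem \ref{extconc2}: using $\|\nabla^\Sigma r_p\|=1$ and $\Delta^\Sigma\cos r_p=-n\cos r_p$ from \eqref{eq:lapsub}, integrating ${\rm div}^\Sigma(\cos^{2k+1}(r_p)\nabla^\Sigma\cos r_p)$ over the closed $\Sigma$ yields $\int_\Sigma\cos^{2(k+1)}(r_p)=\tfrac{2k+1}{n+2k+1}\int_\Sigma\cos^{2k}(r_p)$, i.e. precisely the recursion \eqref{eqdoss4Bis} on $\mathbb{S}^n(1)$ with $m$ replaced by $n$; since the $k=0$ values match up to the factor ${\rm vol}(\Sigma)/{\rm vol}(\mathbb{S}^n(1))$, induction gives $\int_\Sigma\cos^{2k}(r_p)=\tfrac{{\rm vol}(\Sigma)}{{\rm vol}(\mathbb{S}^n(1))}\int_{\mathbb{S}^n(1)}\cos^{2k}(d_p)$ for all $k$, and a Weierstrass approximation of $\mathbf 1_{(-\bar\epsilon,\bar\epsilon)}$ by even polynomials in $\cos r_p$ gives the same conclusion. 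A third route is the coarea formula together with the classical fact that the fibres of a minimal Riemannian submersion of a closed manifold all have the same volume (the first variation of fibre volume along a basic horizontal field $\widetilde X$ equals, up to sign, $\int_{\pi^{-1}(q)}\langle\vec H,\widetilde X\rangle=0$, so the fibre–volume function is locally constant on the connected base). I expect the only genuinely non-formal point, in any of these routes, to be this last passage from an identity valid on smooth (or polynomial) test functions to the identity on the indicator of the strip — a routine approximation argument resting on the boundary level set being null in $\Sigma$; everything else is either the Laplacian intertwining (already quoted in the excerpt) or a direct appeal to Theorem \ref{equator}.
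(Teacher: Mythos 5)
Your proposal is correct and correctly decouples the two assertions: the middle equality is where minimality of the fibers enters, and the right-hand inequality is a direct appeal to Theorem \ref{equator} (together with the rotational symmetry of $\mathbb{S}^n(1)$), exactly as the paper intends. Your observation that $\delta$ plays no role in this bound and that the exponent involves $n$ rather than $m$ is also consistent with the paper.

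The paper's own proof of the equality is yet a fourth route, closest in spirit to your third one but formally distinct: it introduces the radial function $E(q)=\int_0^{r_p(q)}\tfrac{V(t)}{V'(t)}\,dt$ (with $V(t)={\rm vol}(B_t(p))$), verifies $\Delta^\Sigma E=1$ from the Laplacian intertwining identity \eqref{eq:lapsub}, integrates over the sublevel sets $D_t=\pi^{-1}(B_t(p))$ using the divergence theorem and the coarea formula, and obtains the ODE $U(t)=\tfrac{V(t)}{V'(t)}U'(t)$ for $U(t)={\rm vol}(D_t)$, whence $U/V$ is constant and equal to ${\rm vol}(\Sigma)/{\rm vol}(\mathbb{S}^n(1))$. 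This handles the passage from test functions to sets implicitly (one just evaluates the constant ratio at $R=\tfrac{\pi}{2}\pm\epsilon$ and subtracts), so the approximation step that you flag as the only ``non-formal'' point in your routes simply does not arise. Your Hodge-theoretic route is the most conceptual and shows the result is really a statement about the push-forward measure $\pi_*(\mathrm{vol}_\Sigma)$ being a constant multiple of $\mathrm{vol}_{\mathbb{S}^n}$; your moment-recursion route parallels the machinery of Theorem \ref{extconc2} and makes the structural analogy between the immersion and submersion cases transparent; your fiber-volume-constancy route is the standard geometric fact and is essentially what the paper's $\Delta E=1$ computation proves in disguise. All are valid; they differ from the paper chiefly in how they dispose of the set-versus-function approximation, which your routes 1 and 2 need (you correctly note $\pi^{-1}(\partial\Omega(p,\epsilon))$ is null) while the paper's device and your route 3 avoid.
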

\begin{proof}
Since $\pi:\Sigma \rightarrow \ese^n(1)$ is a Riemmanian submersion we have that
$$
\Vert \nabla^\Sigma r_p(x)\Vert =\Vert \nabla\Sigma \left(d_p\circ \pi\right)\Vert=1, \quad x\in \Sigma-\{\pi^{-1}(p)\}\cup \{\pi^{-1}(-p) \}
$$
where  $d_p$ is the distance function on $\mathbb{S}^n(1)$ to $p$. 
Let us denote by $D_R$ the sublevel sets of $r_p$, namely
$$
D_R:=\{x\in \Sigma\, : \, r_p(x)<R\}=\pi^{-1}(B_R(p))
$$
where $B_R(p)$ is the geodesic ball of radius $R$ of $\mathbb{S}^n(1)$ centered at $p$. By the coarea formula the function $t\mapsto U(t)={\rm vol}(D_t)$ has derivative
$$
U'(t)=\int_{\partial D_t}\frac{1}{\Vert \nabla^\Sigma r_p\Vert}={\rm vol}(\partial D_t)
$$
for any $t\in (0,\pi)$. Now, we consider the function $E:\Sigma\to \mathbb{R}$ given by
$$
q\in \Sigma\mapsto E(q)=\int_0^{r_p(q)}\frac{V(t)}{V'(t)}dt
$$
where $V(t)={\rm vol}(B_t(p))$, and $V'(t)=\frac{d}{dt}V(t)={\rm vol }(\partial B_t(p))$. As  $E=E(r_p)$ is a radial function, we have, using equation \eqref{eq:lapsub}, that
$$
\Delta^{\Sigma}E=1.
$$
Hence by using the divergence theorem we can deduce that
$$
\begin{aligned}
    U(t)=\int_{D_t}\Delta^\Sigma E=\frac{V(t)}{V'(t)}\int_{\partial D_t}\Vert \nabla r_p\Vert=\frac{V(t)}{V'(t)}U'(t).
\end{aligned}
$$
Thence,
$$
\frac{d}{dt}\left(\frac{U(t)}{V(t)}\right)=0
$$
for all $t\in (0,\pi)$. This implies that for any $R\in(0,\pi)$
$$
\frac{{\rm vol}(\pi^{-1}(B_R(p)))}{{\rm vol}(B_R(p))}=\lim_{R\to \pi}\frac{{\rm vol}(\pi^{-1}(B_R(p)))}{{\rm vol}(B_R(p))}=\frac{{\rm vol}(\Sigma)}{{\rm vol}(\mathbb{S}^n(1))}.
$$
Finally the theorem follows by using
$$
\begin{aligned}
\frac{{\rm vol}(\pi^{-1}(\Omega(p,\epsilon)))}{{\rm vol}(\Sigma)}=&\frac{{\rm vol}(\pi^{-1}(B_{\frac{\pi}{2}+\epsilon}(p)))}{{\rm vol}(\Sigma)}-\frac{{\rm vol}(\pi^{-1}(B_{\frac{\pi}{2}-\epsilon}(p)))}{{\rm vol}(\Sigma)}  \\
=&\frac{{\rm vol}(B_{\frac{\pi}{2}+\epsilon}(p)))\frac{{\rm vol}(\pi^{-1}(B_{\frac{\pi}{2}+\epsilon}(p)))}{
{\rm vol}(B_{\frac{\pi}{2}+\epsilon}(p)))}}{{\rm vol}(\Sigma)}-\frac{{\rm vol}(B_{\frac{\pi}{2}-\epsilon}(p)))\frac{{\rm vol}(\pi^{-1}(B_{\frac{\pi}{2}-\epsilon}(p)))}{
{\rm vol}(B_{\frac{\pi}{2}-\epsilon}(p)))}}{{\rm vol}(\Sigma)}\\
=&\frac{{\rm vol}(B_{\frac{\pi}{2}+\epsilon}(p))}{{\rm vol}(\mathbb{S}^n(1))}-\frac{{\rm vol}(B_{\frac{\pi}{2}-\epsilon}(p))}{{\rm vol}(\mathbb{S}^n(1))}\\
=&\frac{{\rm vol}(\Omega(p,\epsilon))}{{\rm vol}(\mathbb{S}^n(1))},
\end{aligned}
$$
and the Theorem follows by using Theorem \ref{equator}.

\end{proof}

%%%%%%%%%%%%%%%%%%%%%%%%%%%%%%%%%%
%%%%%%%%%%%%%%%%%%%%%%%%%%%%%%%%%%%%
\section*{Appendix I: Proof of theorem \ref{teo:mitjana}}\label{appendix}
%%%%%%%%%%%%%%%%%%%%%%%%%%%%%%%%%%%%
%%%%%%%%%%%%%%%%%%%%%%%%%%%%%%%%%%%%%%%

Our theorem \ref{teo:mitjana}  is a consequence of Corollary V.2 in \cite{MS} where it is proved a concentration of measure around the averaged/mean value of a Lipschitz function defined on the sphere $\ese^n(1)$. To apply this result in our present context, we shall define, given a point $p\in \mathbb{S}^n$, the function $f_p: \ese^n(1) \rightarrow \erre^+$ as
$$
q\mapsto f_p(q):=\cos(d_p(q))=\langle p, q\rangle 
$$
\noindent where $d_p(q):={\rm dist}^{\ese^n(1)}(p,q)={\rm arccos}(\langle p, q\rangle)$ is the distance in the sphere from $p$ to $q$.
The function $f_p$ is a Lipschitz function with Lipschitz constant $1$ because given two points  $q,s\in \mathbb{S}^n$, we have that 

$$
\begin{aligned}
    \left\Vert f_p(q)-f_p(s)\right\Vert&=\left\Vert \cos(d_p(q))-\cos(d_p(s))\right\Vert=\Vert \langle p, q\rangle-\langle p, s\rangle \Vert \\&=\Vert \langle p, q- s\rangle \Vert
\leq \Vert q-s\Vert={\rm dist}^{\mathbb{R}^{n+1}}(q,s)
\leq  {\rm dist}^{\mathbb{S}^n}(p,q).
\end{aligned}
$$
Moreover since $\Delta_{\mathbb{S}^{n}}\cos(d_p)=-n\cos(d_p)$ by using the divergence theorem we conclude that the mean of $f_p$ in $\ese^n(1)$ is 
$$
\langle f_p \rangle=\langle \cos(d_p)\rangle:=\frac{\int_{\mathbb{S}^n}\cos(d_p)}{{\rm vol}(\mathbb{S}^n)}=0.
$$

Now, let us consider the strip, (putting $\bar \epsilon:= \sin \epsilon$),
$$\Omega(p,\epsilon)=\left\lbrace q\in \mathbb{S}^{n}(1)\, : \,   \cos\left(d_p(q)\right)\in ( -\sin\epsilon,\sin\epsilon) \right\rbrace=\{q\in \mathbb{S}^n\, : \, \vert \cos(d_p(q))\vert < \bar\epsilon \}$$
\noindent  so
$$\begin{aligned}
{\rm vol}(\Omega(p, \epsilon))&={\rm vol}(\{q\in \mathbb{S}^n\, : \, \vert \cos(d_p(q))\vert < \bar\epsilon \})\\&={\rm vol}(\ese^n(1))-{\rm vol}(\{q\in \mathbb{S}^n\, : \, \vert \cos(d_p(q))\vert \geq \bar\epsilon \})
\end{aligned}$$
Now, applying corollary V.2 of \cite{MS}  to the Lipschitz function $f_p=\cos\circ d_p$ with Lispchitz constant $\sigma=1$, we have that 
$$\frac{{\rm vol}(\{q\in \mathbb{S}^n\, : \, \vert \cos(d_p(q))\vert \geq \bar\epsilon \})}{{\rm vol}(\mathbb{S}^n)}\leq 4 e^{-\delta\bar\epsilon^2(n+1)}$$
\noindent  and hence
%$$
%\frac{{\rm vol}(\Omega(p,\epsilon))}{{\rm vol}(\mathbb{S}^n)}=\frac{{\rm vol}\left(\{q\in \mathbb{S}^n\, : \, \vert \cos(d_p(q))\vert \geq \bar\epsilon \}\right)}{{\rm vol}(\mathbb{S}^n)}\geq 1-4e^{-\delta \bar\epsilon^2(n+1)}
%$$
$$
\begin{aligned}
\frac{{\rm vol}(\Omega(p,\epsilon))}{{\rm vol}(\mathbb{S}^n)}&=\frac{ {\rm vol}(\ese^n(1))-{\rm vol}(\{q\in \mathbb{S}^n\, : \, \vert \cos(d_p(q))\vert \geq \bar\epsilon \})}{{\rm vol}(\mathbb{S}^n) }\\&=1-\frac{{\rm vol}(\{q\in \mathbb{S}^n\, : \, \vert \cos(d_p(q))\vert \geq \bar\epsilon \}) }{ {\rm vol}(\ese^n(1))}\geq 1-4e^{-\delta \bar\epsilon^2(n+1)}
\end{aligned}
$$
and the theorem is proved.
%%%%%%%%%%%%%%%%%%%%%%%%%%%%%%%%%%%%%
%%%%%%%%%%%%%%%%%%%%%%%%%%%%%%%%%%%%%

  %%%%%%%%%%%%%%%%

\end{document}